\let\oldlabel=\label
\def\prellabel{\marginparsep=1em
    \def\label##1{\oldlabel{##1}\ifmmode\else\ifinner\else
         \marginpar{{\footnotesize\ \\ \tt
                    ##1}}\fi\fi}}
\let\Bbb=\mathbb
\def\width{\operatorname{width}}
\def\vertex{\operatorname{vert}}
\def\CN{\operatorname{CN}}
\def\BCN{\operatorname{BCN}}
\def\cn{\operatorname{\mathfrak{cn}}}
\def\bcn{\operatorname{\mathfrak{bcn}}}
\def\E{\operatorname{\textsf{E}}}
\def\aff{\operatorname{aff}}
\def\inte{\operatorname{int}}
\def\Q{{\Box\kern1pt}}
\def\rank{\operatorname{rank}}
\def\inte{\operatorname{int}}
\def\conv{\operatorname{conv}}%
\def\RR{{\Bbb R}}
\def\FF{{\Bbb F}}
\def\QQ{{\Bbb Q}}
\def\ZZ{{\Bbb Z}}
\def\NN{{\Bbb N}}
\def\UU{\Bbb U_{\vertex}(P,c)}
\newtheorem{lemma}{Lemma}[section]
\newtheorem{corollary}[lemma]{Corollary}
\newtheorem{theorem}[lemma]{Theorem}
\theoremstyle{definition}
\newtheorem{definition}[lemma]{Definition}
\newtheorem{remark}[lemma]{Remark}
\begin{document}

\title[Convex normality]{Convex normality of rational polytopes\\ with long edges}

\author[Joseph Gubeladze]
{Joseph Gubeladze}

\thanks{Partially supported by NSF grant DMS-1000641}

\address{Department of Mathematics, San Francisco
State University, San Francisco, CA 94132, USA}

\subjclass[2000]{52B20; Secondary 05E40, 11H06, 13F99, 14M25}

\begin{abstract}
We introduce the property of \emph{convex normality} of rational
polytopes and give a dimensionally uniform lower bound for the
edge lattice lengths, guaranteeing the property. As an
application, we show that if every edge of a lattice $d$-polytope
$P$ has lattice length $\ge4d(d+1)$ then $P$ is normal. This
answers in the positive a question raised in 2007. If $P$ is a
lattice simplex whose edges have lattice lengths $\ge d(d+1)$ then
$P$ is even covered by lattice parallelepipeds. For the approach
developed here, it is necessary to involve rational polytopes even
for the application to lattice polytopes.
\end{abstract}

\maketitle

\section{Integrally closed polytopes}\label{intro}
All our polytopes are assumed to be convex. For a polytope $P$ the
set of its vertices will be denoted by $\vertex(P)$.

A polytope $P\subset\RR^d$ is \emph{lattice} if
$\vertex(P)\subset\ZZ^d$, and $P$ is \emph{rational} if
$\vertex(P)\subset\QQ^d$.

Let $P\subset\RR^d$ be a lattice polytope and denote by $L$ the
subgroup of $\ZZ^d$, affinely generated by the lattice points in
$P$; i.~e.,
$$
L=\sum_{x,y\in P\cap\ZZ^d}\ZZ(x-y)\subset\ZZ^d
$$

\begin{definition}\label{definition}(\cite[Def. 2.59]{kripo})
Let $P\subset\RR^d$ be a lattice polytope.

(a) $P$  is \emph{integrally closed} if the following condition is
satisfied:
\begin{align*}
c\in\NN,\ z\in cP\cap\ZZ^d\quad\Longrightarrow\quad\exists
x_1,\ldots,x_c\in P\cap\ZZ^d\quad x_1+\cdots+x_c=z.
\end{align*}

(b) $P$ is \emph{normal} if  for some (equivalently, every) point $t\in P\cap\ZZ^d$
the following condition is satisfied:
\begin{align*}
c\in\NN,\ z\in cP\cap(ct+L)\quad \Longrightarrow\quad \exists
x_1,\ldots,x_c\in P\cap\ZZ^d,\quad x_1+\cdots+x_c=z.
\end{align*}
\end{definition}
\noindent(Observe, $P\cap(t+L)=P\cap\ZZ^d$)

The normality property is invariant under \emph{affine
isomorphisms of lattice polytopes,} and the property of being
integrally closed is invariant under affine changes of
coordinates, leaving the lattice structure $\ZZ^d\subset\RR^d$
invariant.

A lattice polytope  $P\subset\RR^d$ is integrally closed if and
only if it is normal and $L$ is a direct summand of $\ZZ^d$.
Obvious examples of normal but not integrally closed polytopes are
the s.~c. \emph{empty} lattice simplices of large volume. No
classification of such simplices is known in dimensions $\ge4$,
the main difficulty being the lack of satisfactory
a characterization of their lattice widths; see
\cite{empty2,empty1}. For recent advances in the field see
\cite{kantor,batyrev}.

A normal polytope $P\subset\RR^d$ can be made into a
full-dimensional integrally closed polytope by changing the
lattice of reference $\ZZ^d$ to $L$, the ambient Euclidean
space $\RR^d$ to the subspace $\RR L$, and shifting $P$ so that $0\in P$. In particular, normal and
integrally closed polytopes refer to same isomorphism classes of
lattice polytopes. In the literature, however, the difference
between `normal' and `integrally closed' is sometimes blurred.

Normal/integrally closed polytopes enjoy popularity in algebraic
combinatorics and they have been showcased on recent workshops
(\cite{aim,mfo}). These polytopes represent the homogeneous case
of the Hilbert bases of finite positive rational cones and the
connection to algebraic geometry is that they define projectively
normal embeddings of toric varieties. There are many challenges of
number theoretic, ring theoretic, homological, and $K$-theoretic
nature, concerning the associated objects: Ehrhart series',
rational cones, toric rings, and toric varieties; see
\cite{kripo}.

If a lattice polytope is covered by (in particular, subdivided
into) integrally closed polytopes, then it is integrally closed as
well. The simplest integrally closed polytopes one can think of
are \emph{unimodular simplices}, i.~e., the lattice simplices $
\Delta=\conv(x_1,\ldots,x_k)\subset\RR^d$, $\dim\Delta=k-1$, with
$x_1-x_j,\ldots,x_{j-1}-x_j,x_{j+1}-x_j,\ldots,x_k-x_j$ a part of
a basis of $\ZZ^d$ for some (equivalently, every) $j$.

Unimodular simplices are the smallest `atoms' in the world of
normal polytopes. But not all $3$-dimensional integrally closed polytopes
are triangulated into unimodular simplices \cite{kantor-sarkaria}. (The first such example  in dimension 4 was given in \cite[Prop.
1.2.4]{brgutr}.) Moreover, not all $5$-dimensional integrally closed
polytopes are covered by unimodular simplices \cite{unico} --
contrary to what had been conjectured before \cite{sebo}. Further
`negative' results, such as \cite{icp} and \cite{cara}, the latter disproving
an additive version of the unimodular cover property that was conjectured in \cite{cfs}, contributed to the current thinking
in the area that there is no succinct geometric characterization
of the normality property. One could even conjecture that in
higher dimensions the situation gets as bad as it can; see the
discussion at the end of \cite[p. 2313]{mfo}.

`Positive' results in the field mostly concern special classes of
lattice polytopes that are normal, or have unimodular
triangulations or unimodular covers. Knudsen-Mumford's classical
theorem (\cite[Sect. 3B]{kripo}, \cite[Chap. III]{kkms}) says
that every lattice polytope $P$ has a multiple $cP$ for some
$c\in\NN$ that is triangulated into unimodular simplices. Whether the factor $c$ can be chosen uniformly w.r.t. dimension
seems to be a very hard problem. More recently, it was shown in
\cite{multiples} that there exists a dimensionally uniform
exponential lower bound for unimodularly covered dilated
polytopes. By improving one crucial step in \cite{multiples}, von
Thaden was able to cut down the bound to a degree 6 polynomial
function in the dimension \cite[Sect. 3C]{kripo},
\cite{vonthaden}.

For polytopes,  arising in a different context and admitting
unimodular triangulations as certificate of normality, see
\cite{beck-hosten,kitamura,ohsugi-hibi,reiner-welker}; for other techniques
for establishing normality, along with its higher homological analogues,
see \cite{payne}.

The results above on dilated polytopes yield no new examples of
normal polytopes, though. In fact, an easy argument ensures that
for any lattice $d$-polytope $P$ all multiples $cP$,
$c\ge d-1$, are integrally closed \cite[Prop.
1.3.3]{brgutr}, \cite{ewald}. However, that argument does not
allow a modification that would apply to lattice polytopes with
long edges of independent lengths.

The following conjecture was proposed in \cite[p. 2310]{mfo}:

\medskip\noindent{\bf Conjecture.} \emph{Simple lattice polytopes with
`long' edges are normal, where `long' means some invariant,
uniform in the dimension.}

\emph{More precisely, let $P$ be a simple lattice polytope. Let
$k$ be the maximum over the heights of Hilbert basis elements of
tangent cones to vertices of $P$. Then, if any edge of $P$ has
length $\ge k$, the polytope $P$ should be normal.}

\medskip Here: (i) the length is measured in the lattice sense, (ii)
`tangent cones' is the same as corner cones, and (iii) the heights
of Hilbert basis elements of corner cones are normalized w.r.t.
the extremal generators of the cones (leading, in particular, to
non-integral rational heights).

The second part of the conjecture is a far reaching extension of
the following well known problem, a.k.a. \emph{Oda's question},
that has attracted much interest recently: are all \emph{smooth
polytopes} normal? A lattice polytope $P\subset\RR^d$ is called
smooth if the primitive (i.~e., with coprime components) edge
vectors at every vertex of $P$ define a part of a basis of
$\ZZ^d$. Smooth polytopes correspond to the projective
embeddings of smooth projective toric varieties and they are simple polytopes
with $k=1$. Oda's question still remains wide open. The
fact that so far no smooth polytope just without a unimodular
triangulation has been found illustrates how limited our
understanding in the area is. The second part of the conjecture
yields also a dimensionally uniform bound, mentioned in the
first part. In fact, it is known that, for every $d\ge2$, the normalized heights of
Hilbert basis elements of a simplicial rational $d$-cone
are at most $d-1$; see, for instance, \cite[Prop.
2.43(d)]{kripo}.

Another motivation for the conjecture above is the following question in toric geometry, discussed
in \cite[p. 2310]{mfo}: are all line bundles over a projective toric variety,
deep enough inside the \emph{nef cone}, projectively normal? If so, how deep is `deep enough'? See \cite[Chapter 6]{toricbook} for generalities on the nef cones of toric varieties. It is not difficult to show that if an ample line bundle $\mathcal L$ over a projective toric variety $X$ is on lattice depth $l$ inside the nef cone $\text{Nef}(X)$ w.r.t. every facet of the cone, then the edges of the lattice polytope $P$ of $\mathcal L$ are all of lattice lengths $\ge l$.

In this paper we introduce the notion of
\emph{$k$-convex-normality}, $k\in\QQ_{\ge2}$, which is a
`convex-rational' version of Definition \ref{definition}. Next is
the main result of the paper:

\begin{theorem}\label{conjecture(a)}
Let $P$ be a rational (not necessarily simple) polytope of dimension $d$ whose every edge has lattice length
$\ge d(d+1)k$. Then $P$ is $k$-convex-normal.
\end{theorem}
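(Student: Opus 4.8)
The plan is to reduce $k$-convex-normality to the pointwise statement that for every rational $c\ge k$ and every $z\in cP\cap\ZZ^d$ there is a lattice point $x\in P\cap\ZZ^d$ with $z-x\in(c-1)P$; equivalently, that the polytope $R_z:=P\cap(z-(c-1)P)$ contains a lattice point. Note $R_z\ne\varnothing$, since $z\in cP=P+(c-1)P$, so the whole difficulty is that $R_z$ can be very thin. Two reductions come first. One may assume $P$ full-dimensional (pass to $\aff P$ with its induced lattice). And one may assume $z\in\inte(cP)$: this is an induction on $d=\dim P$, for if $z\in\partial(cP)$ then $z$ lies in the relative interior of $cG$ for a proper face $G\subsetneq P$; every edge of $G$ is an edge of $P$, so $G$ meets the hypothesis in dimension $\dim G<d$, and by the inductive hypothesis a decomposition of $z$ valid inside $cG$ works a fortiori inside $cP$.

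Assume now $z\in\inte(cP)\cap\ZZ^d$. By Carath\'eodory, $z=c\sum_{j}\mu_jv_j$ with at most $d+1$ vertices $v_j$, weights $\mu_j\ge0$, $\sum_j\mu_j=1$; pick $v$ among them with $\mu_v$ maximal, so $\mu_v\ge1/(d+1)$ and $z$ is correspondingly deep with respect to every facet of $cP$ not through $cv$. Let $u_1,\dots,u_m$ be the primitive edge vectors at $v$ with edge lattice lengths $\ell_i\ge0.5d^2(d+1)k$, so the corner cone is $C_v=\sum_i\RR_{\ge0}u_i$ and the convex hull $Q_v:=\conv\bigl(v,\,v+\ell_1u_1,\dots,v+\ell_mu_m\bigr)$ of $v$ with its neighbours lies in $P$. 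Writing $z-cv=\sum_it_iu_i\in C_v$, a candidate $x=v+\sum_ia_iu_i$ with $a_i\in\ZZ_{\ge0}$ lies in $R_z$ as soon as $a_i\le t_i$, $\sum_ia_i/\ell_i\le1$, and $z-x$ clears the facets of $P$ away from $v$; by the chosen depth of $z$, this last requirement reads $\sum_ia_i/\ell_i\ge1-\mu_v$. One therefore wants integers $0\le a_i\le\lfloor t_i\rfloor$ with $\sum_ia_i/\ell_i$ in a window of width $\mu_v\ge1/(d+1)$. Building the $a_i$ up from $0$ in unit steps moves $\sum_ia_i/\ell_i$ in increments $\le1/\ell_i$, which by the hypothesis $\ell_i\ge0.5d^2(d+1)k$ are well below $1/(d+1)$, while the attainable values reach from $0$ to something $\ge1-\mu_v$; hence one of them lands in the window. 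For a non-simple $v$ the representation $z-cv=\sum_it_iu_i$ is not unique and $m$ may exceed $d$, and this final step must instead be run through a simplicial subdivision of $C_v$ and, for residuals that come to rest near lower faces, through the quoted bound that normalized heights of Hilbert basis elements of a rational simplicial $d$-cone are $\le d-1$. Propagating this resolution simultaneously across the $\le d$ coordinate directions at $v$, the flag of faces of $P$ one traverses, and the dilation factor $k$, is what yields the bound $0.5d^2(d+1)k=k\cdot d\cdot\binom{d+1}{2}$.

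I expect the technical heart, and the main obstacle, to be exactly this quantitative lattice-point step. Proving $R_z\cap\ZZ^d\ne\varnothing$ is not an inradius or volume estimate — $R_z$ genuinely degenerates, for instance to a single (possibly non-lattice) point when $z$ is a vertex of $cP$ — so one must extract from the long edges an explicit lattice-translated parallelepiped sitting inside $R_z$, arranging the per-face and per-direction rounding errors to telescope to the polynomial $\binom{d+1}{2}$-type loss rather than an exponential one, and handling non-simple corner cones uniformly. A second, structural obstacle is that all of this forces honestly rational data even when $P$ is a lattice polytope: the dilation factor $c$ is non-integral, the vertex $v$ and the section $R_z$ need not be lattice objects, and the face induction generates the non-integral dilates $cG$ — which is the phenomenon the abstract alludes to.
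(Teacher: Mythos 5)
Your opening reduction does not prove the theorem, because it replaces the statement to be proved by a different one. The condition $\CN(d,k)$ of Definition \ref{convexnormal} asserts that the \emph{entire} dilate $cP$ --- every real point of it --- is covered by the translates $x+P$, where $x$ ranges over $(c-1)P\cap\big((c-1)v+\ZZ^d\big)$ for $v\in\vertex(P)$; since $c$ is rational and $P$ is only rational, these $x$ are in general not lattice points, and neither are the points $z$ to be covered. Your plan instead treats only $z\in cP\cap\ZZ^d$ and seeks $x\in P\cap\ZZ^d$ with $z-x\in(c-1)P$. That is (up to the swap of the roles of $P$ and $(c-1)P$, which is exactly the ``dual'' formulation that Remark \ref{whyrational}(b) points out blocks the induction on $d$) essentially the integral-closedness condition of Definition \ref{definition}(a), not convex normality. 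For a rational non-lattice $P$ your statement can even be vacuous while $\CN(d,k)$ is a nontrivial covering condition, and --- the decisive point --- your own induction cannot close: when you descend to a proper face $G$, or to the sections and homothetic copies that arise in the interior analysis, the resulting polytopes are not lattice polytopes and the dilation factors are not integers, so the lattice-point statement you are inducting on is no longer an admissible inductive hypothesis. This is precisely why the paper introduces the convex, all-points, rational version and proves \emph{that} by induction; see Remark \ref{whyrational}(a).

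Separately, even for the weakened lattice-point statement the quantitative heart is not carried out. The step ``choose integers $0\le a_i\le\lfloor t_i\rfloor$ with $\sum_i a_i/\ell_i$ in a window of width $\mu_v\ge 1/(d+1)$'' is asserted for simple vertices and deferred for the general case (``must instead be run through a simplicial subdivision\dots''), and you yourself flag it as the main obstacle; moreover the claim that the distance of $z$ from the facets of $cP$ away from $cv$ is controlled by the single barycentric weight $\mu_v$ of a Carath\'eodory representation is unjustified for non-simplicial $P$. The paper's actual mechanism is different in kind: it splits $cP$ into facet layers $U_{cP}(cF,\varepsilon_F)$, handled by induction on dimension through the homothetic lids of truncated pyramids (Lemmas \ref{pyramid} and \ref{homotheticlayers}), and a deep region covered by propagating corner parallelepipeds $\Box(C_j)$ along simplicial subdivisions of the corner cones (Lemmas \ref{height} and \ref{pcover} and the Minkowski-sum computation (\ref{crucialsequence})), and then solves the recursion of Corollary \ref{maininequality} to obtain the bound $0.5d^2(d+1)k$. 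Neither that machinery nor a workable substitute for it is present in the proposal.
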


Although $k$-convex-normality concerns the dilated polytopes $cP$
with $c\in[2,k]_\QQ$, when applied to lattice polytopes this is
enough to cover the factors $c\in\NN$ in Definition
\ref{definition}, even with $k=4$. As an application to lattice
polytopes, we prove the first part of the conjecture above in the
following strong form:

\begin{theorem}\label{latticepolytopes}
Let $P$ be a (not necessarily simple) lattice polytope of
dimension $d$.

(a) If every edge of $P$ has lattice length $\ge4d(d+1)$ then
$P$ is integrally closed.

(b) If $P$ is a simplex and every edge of $P$ has lattice length
$\ge d(d+1)$ then $P$ is covered by lattice parallelepipeds. In
particular, $P$ is integrally closed.
\end{theorem}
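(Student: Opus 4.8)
The plan is to derive (a) from Theorem~\ref{conjecture(a)} and to prove (b) by an explicit covering construction tailored to simplices.

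\emph{Part (a).}  Since $2d^2(d+1)=0.5\,d^2(d+1)\cdot 4$, Theorem~\ref{conjecture(a)} with $k=4$ shows that a lattice $d$-polytope all of whose edges have lattice length $\ge 2d^2(d+1)$ is $4$-convex-normal.  It then remains to pass from $4$-convex-normality to the condition of Definition~\ref{definition}(a) for \emph{every} integer factor $c\in\NN$; as recorded in the remark following Theorem~\ref{conjecture(a)}, this is an elementary reduction, which I would simply invoke.  (Informally: given a lattice point of $cP$ one peels off a lattice point of $P$, and since $4$-convex-normality controls \emph{all} rational dilation factors in $[2,4]$ — not just $2,3,4$ — the bounded range already reaches every $c\in\NN$.)  Thus $P$ satisfies Definition~\ref{definition}(a), i.e.\ $P$ is integrally closed.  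The substantive content here is Theorem~\ref{conjecture(a)} itself, which I am taking as given; I expect no further obstacle in (a).

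\emph{Part (b).}  I would write $P=\conv(v_0,\dots,v_d)$ and, for $i\ne j$, set $v_i-v_j=\ell_{ji}\,u_{ji}$ with $u_{ji}\in\ZZ^d$ primitive, so $\ell_{ji}\ge d(d+1)$ is the lattice length of the edge $[v_j,v_i]$.  Fix a vertex $v_j$.  Because $P$ is a simplex, the $d$ vectors $u_{ji}$ ($i\ne j$) are linearly independent, so $\Pi_j:=\{\sum_{i\ne j}c_i u_{ji}:0\le c_i\le 1\}$ is a lattice parallelepiped, and so is every translate $q+\Pi_j$ with $q=v_j+\sum_{i\ne j}a_i u_{ji}$, $a_i\in\ZZ_{\ge0}$.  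Let $R_j$ be the union of all translates $q+\Pi_j$ of this form that lie inside $P$.  By construction $R_j$ is covered by lattice parallelepipeds contained in $P$, so everything reduces to showing $\bigcup_{j=0}^d R_j=P$.

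The key estimate is that $R_j$ swallows a large homothet of $P$ based at $v_j$.  Introduce coordinates $x=v_j+\sum_{i\ne j}r_i u_{ji}$, in which $P$ is cut out by $r_i\ge0$ ($i\ne j$) and $\sum_{i\ne j}r_i/\ell_{ji}\le1$, and set $\varepsilon_j:=\sum_{i\ne j}1/\ell_{ji}$, so $\varepsilon_j\le d\cdot\frac1{d(d+1)}=\frac1{d+1}$ by the edge hypothesis.  If $x\in P$ has $\sum_{i\ne j}r_i/\ell_{ji}\le 1-\varepsilon_j$, put $q=v_j+\sum_{i\ne j}\lfloor r_i\rfloor u_{ji}$: then $x\in q+\Pi_j$, and every vertex $v_j+\sum_{i\ne j}s_i u_{ji}$ of $q+\Pi_j$, with $s_i\in\{\lfloor r_i\rfloor,\lfloor r_i\rfloor+1\}$, satisfies $s_i\ge0$ and $\sum_{i\ne j}s_i/\ell_{ji}\le\sum_{i\ne j}(r_i+1)/\ell_{ji}\le(1-\varepsilon_j)+\varepsilon_j=1$, so $q+\Pi_j\subseteq P$ and hence $x\in R_j$.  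Therefore $R_j$ contains the homothet of $P$ at $v_j$ of ratio $1-\varepsilon_j\ge\frac d{d+1}$.  To finish I would use that a point of $P$ with barycentric coordinates $(\beta_0,\dots,\beta_d)$ lies in the ratio-$\rho$ homothet at $v_j$ exactly when $\beta_j\ge 1-\rho$; since $d+1$ nonnegative numbers summing to $1$ always have a term $\ge\frac1{d+1}$, the ratio-$\frac d{d+1}$ homothets at $v_0,\dots,v_d$ cover $P$.  Hence $P=\bigcup_{j=0}^d R_j$ is covered by lattice parallelepipeds inside $P$; as lattice parallelepipeds are integrally closed (an easy direct check) and coverings by integrally closed polytopes inherit integral closedness, $P$ is integrally closed.

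\emph{The hard part.}  The crux of (b) is the covering itself.  The parallelepipeds must at once be lattice polytopes, sit inside $P$, and exhaust $P$, and the last two pull against each other: making a corner parallelepiped at $v_j$ fit inside $P$ costs a fringe of total thickness $\varepsilon_j=\sum_{i\ne j}1/\ell_{ji}$ along the opposite facet, while covering $P$ by corner homothets of ratio $\rho$ forces $1-\rho\le\frac1{d+1}$.  The threshold $\ell_{ji}\ge d(d+1)$ is precisely what yields $\varepsilon_j\le\frac1{d+1}$ and reconciles the two; with a smaller bound, points of $P$ with barycentric coordinates close to $\frac1{d+1}$ would escape every $R_j$.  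In (a) the analogous difficulty is entirely absorbed into Theorem~\ref{conjecture(a)}.
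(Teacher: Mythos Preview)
Your proof is correct.  For (a) you follow the paper exactly: apply Theorem~\ref{conjecture(a)} with $k=4$ and invoke the reduction from $4$-convex-normality to integral closedness that the paper announces right before Theorem~\ref{latticepolytopes} (and carries out in \S\ref{subsection} via the dyadic rescaling $P_n=2^{n-1}P$, $L_n=2^{n-1}\ZZ^d$, which brings every integer factor $c$ into the rational window $[2,4]$).

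For (b) your route is genuinely different and more elementary than the paper's.  The paper appeals to Corollary~\ref{pcoverweak}, which in turn rests on Lemma~\ref{pcover} and the Euclidean gauging Lemma~\ref{height}; that machinery is formulated for arbitrary polytopes and works through Euclidean widths and the estimate $\max_v\|v,H\|\ge 1/\sqrt d$ for the standard simplex.  You bypass all of this by exploiting the simplex structure directly: in the affine coordinates $r_i$ along the primitive edge directions at $v_j$, membership in $P$ is the single inequality $\sum_{i\ne j} r_i/\ell_{ji}\le 1$, and the total cost of pushing the corner box $q+\Pi_j$ inside $P$ is exactly $\varepsilon_j=\sum_{i\ne j}1/\ell_{ji}$, which the edge hypothesis bounds by $1/(d+1)$.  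Both arguments arrive at the same endpoint---the homothets of ratio $d/(d+1)$ at the vertices cover $P$, as in Lemma~\ref{cn-alternative}---but yours reaches it without any Euclidean metric computation.  The trade-off is that the paper's Section~\ref{deeppcovers} machinery is reusable for general polytopes (and is in any case needed for Theorem~\ref{conjecture(a)}), whereas your argument is tailored to simplices; for the purpose of proving (b) in isolation, yours is the cleaner path.
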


In particular, if a line bundle $\mathcal L$ over a projective toric variety is on lattice depth $\ge4d(d+1)$ w.r.t. every facet of the nef cone, then $\mathcal L$ is projectively normal.

\medskip For the reader's convenience we now give a brief outline
of the proof of Theorem \ref{conjecture(a)}. Let $P$ be a rational polytope with long
edges. Assuming Theorem \ref{conjecture(a)} is true in dimension
$d-1$, we first show that the neighborhood of a certain width of
the boundary surface of any multiple $cP$ with $c\in[2,k]_\QQ$
behaves as if $P$ were convex-normal. Then it is shown that the
complement of this neighborhood is covered by certain parallel
translates of lattice parallelepipeds inside $cP$. This does not
require the inductive assumption and is achieved by propagating
`corner parallelepipedal covers' deep inside $cP$. Actually, the
situation is more subtle, the reason being that the width of the mentioned boundary of $cP$ depends on $P$ and does not grow along with $c$. As a result, one needs that the inductively covered boundary neighborhood and
the region, covered by the parallelepipeds, overlap in certain
nontrivial way.

\medskip\noindent\emph{Acknowledgement.} We thank (i) the referee, whose thorough study of the paper resulted in a number of substantial expositional improvements and inclusion of pictures, and  (ii) Diane Fenster, who created the pictures used in this paper.

\subsection{Notation and terminology}\label{preliminaries}
The affine and convex hulls of a subset $X\subset\RR^d$ will be
denoted, respectively, by $\aff(X)$ and $\conv(X)$.

The relative interior $\inte(P)$ of a polytope $P\subset\RR^d$ is
by definition the absolute interior of $P$ in $\aff(P)$.

Let $H\subset\RR^d$ be an affine hyperplane. The one of the two half-spaces, bounded by $H$ and clear from the context, will denoted by $H^+$.

For a polytope $P\subset\RR^d$ the set of its facets will be denoted by $\FF(P)$. (Recall, $\vertex(P)$ is the set of vertices of $P$.) If $\dim P=d$ and $F\in\FF(P)$, the half-space
$H_F^+$ and hyperplane $H_F$ are defined from the unique
irredundant representation of the form (\cite[Thm. 1.6]{kripo},
\cite[Thm. 2.15(7)]{ziegler})

$$
P=\bigcap_{\FF(P)} H_F^+,\quad H_F=\aff(F).
$$

A polytope is \emph{simple} if its edge directions at every vertex
are linearly independent.

A \emph{parallelepiped} is by definition the Minkowski sum of
segments of linearly independent directions.

Cones $C\subset\RR^d$ are always assumed to be finite and
positive, i.~e., they are intersections of finitely many
homogeneous half-spaces and contain no nontrivial subspaces. A
cone is \emph{simplicial} if its edge directions are
linearly independent.

Let $C\subset\RR^d$ be a rational cone, i.~e., $C$ is the
intersection of half-spaces with rational boundary hyperplanes.
Then the primitive lattice points on the one-dimensional faces (\emph{rays}) of $C$ are
called the \emph{extremal generators} of $C$.

A \emph{$d$-polytope} or \emph{$d$-cone} is the same as a
$d$-dimensional polytope or, respectively, $d$-dimensional cone.

$\RR_+$, $\QQ_+$, and
$\ZZ_+$ refer to the corresponding sets of nonnegative numbers.

For an interval $I\subset\RR$ and number $\lambda\in\RR$  we let
\begin{align*}
I_\QQ=I\cap\QQ,\quad
I_\NN=I\cap\NN,&\quad\QQ_{\ge\lambda}=[\lambda,\infty)_\QQ,\quad\QQ_{>\lambda}=(\lambda,\infty)_\QQ,\\
&\quad\NN_{\ge\lambda}=[\lambda,\infty)_\NN,\quad\NN_{>\lambda}=(\lambda,\infty)_\NN.
\end{align*}

For a subset $X\subset\RR^d$ we put $\RR_+X=\{\lambda x\ |\
\lambda\in\RR_+,\ x\in X\}$.

The \emph{lattice length} of a rational segment
$[x,y]\subset\RR^d$, $x,y\in\QQ^d$, is the ratio of its Euclidean length and that
of the primitive integer vector in the direction of $y-x$.

For a rational polytope $P$, by $\E(P)$ we denote the minimum of
the lattice lengths of the edges of $P$.

The Euclidean distance between a point $x\in\RR^d$ and an affine hyperplane
$H\subset\RR^d$ is denoted by $\|x,H\|$.

\section{Convex normality}\label{result}

For a polytope $P\subset\RR^d$ and a rational number $c\ge1$ denote
$$
\UU=\bigcup_{ {\tiny\begin{matrix}
&v\in\vertex(P)\\
&x\in(c-1)P\bigcap\left((c-1)v+\ZZ^d\right)
\end{matrix}
}} x+P.
$$

\noindent Obviously, $\UU\subset cP$.

Crucial in our approach to the normality property is the following notion that mixes just the optimal amounts of discreteness and continuity:

\begin{definition}\label{convexnormal}
Assume $d\in\NN$, $k\in\QQ_{\ge2}$, and $P$ is a rational
$d$-polytope. $P$ is said to be $k$-\emph{convex-normal} if the following
equality is satisfied for all $c\in[2,k]_\QQ$:
\begin{align*}
\tag{{\tiny$\CN(d,k)$}}\UU=cP.
\end{align*}
\end{definition}

\medskip Here is a convenient equivalent reformulation. For
$c\in\QQ_{\ge2}$ and $v\in\vertex(P)$ denote by $Q(v)$ the
parallel translate of $(c-1)P$ that moves $(c-1)v$ to $cv$. Put
$$
R(v,c)=\bigcup_{x\in Q(v)\cap\left(cv+\ZZ^d\right)}(x-v+P):
$$
\begin{figure}[htb]
\includegraphics[height=2.4in,width=4.2in]{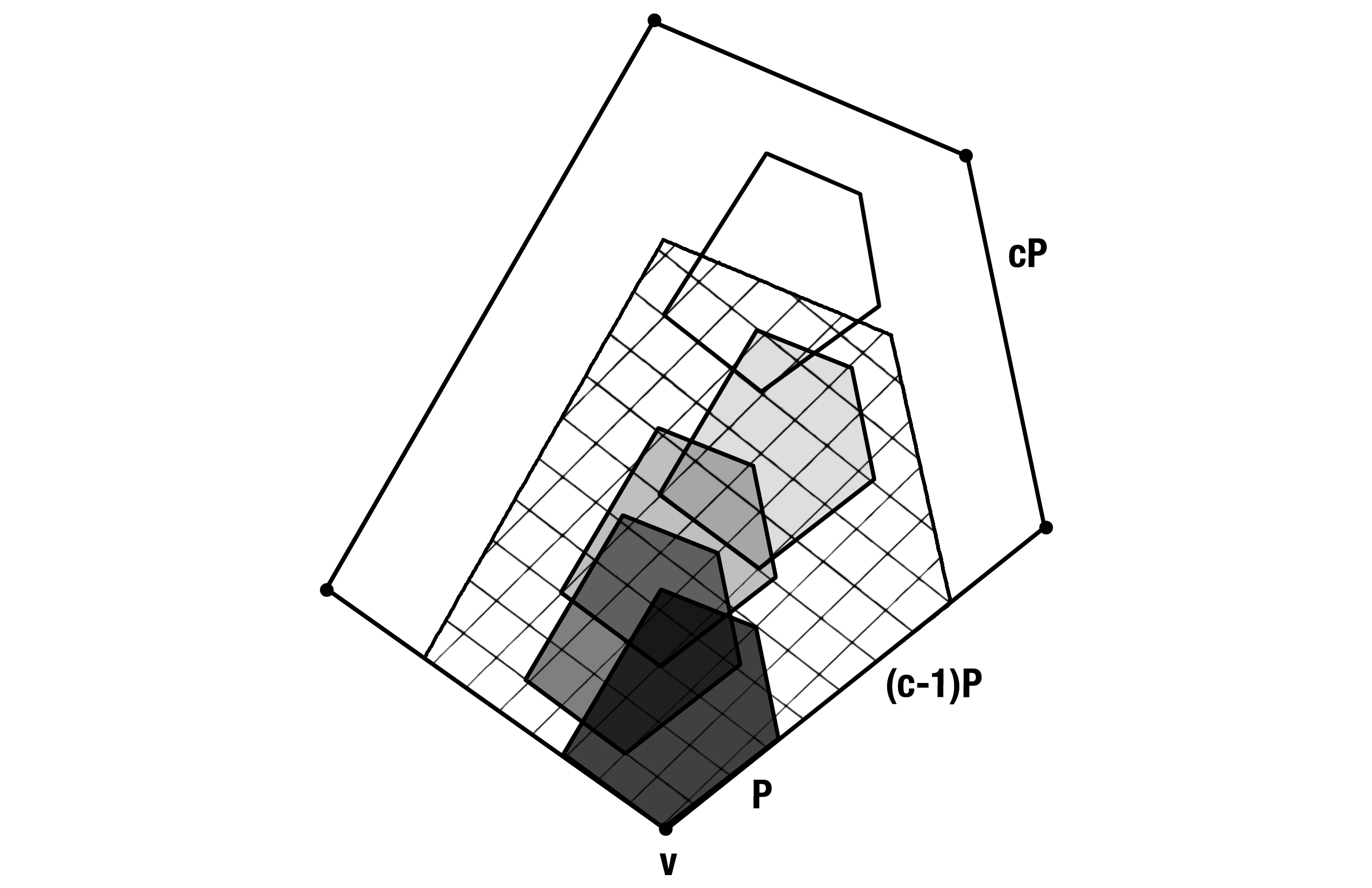}
\end{figure}

\noindent Then $P$ is convex-normal iff for all $c\in[2,k]_\QQ$ we have
$cP=\bigcup_{\vertex(P)} R(v,c)$.

Informally, convex normality is a measure of density of the point configuration $P\cap\ZZ^d$ w.r.t. $P$. For instance, the unimodular simplices of dimension $\ge2$ are \emph{not} convex-normal, but their high multiples are convex-normal. More importantly for our goals, all lattice parallelepipeds are convex-normal; see Lemma \ref{cnormalvsnormal}(a) below.

It is easily observed that a unimodular integral change of coordinates respects the property $\CN(d,k)$, and the same is true for rational parallel translations. Also, one can show (although we do not need it) that $cP=\UU$ for any rational $d$-polytope $P$ and any real number
$c\in\left[1,\frac{d+1}d\right]$.

\begin{lemma}\label{cnormalvsnormal}
(a) Let $\Box$ be a rational parallelepiped. If
$\E(\Box)\ge1$ then $c\Box=\mathbb U_{\vertex}(\Box,c)$ for
every $c\in\QQ_{\ge1}$. If $\E(\Box)<1$ then $\mathbb U_{\vertex}(\Box,c)\not=c\Box$ for all $c\in\mathbb Q_{>2}$, sufficiently close to $2$.

(b)  For every natural number $d$, any $(d-1)$-convex-normal lattice $d$-polytope is integrally
closed.
\end{lemma}

\begin{proof} (a) Assume $\E(\Box)\ge1$. First consider the case $\dim\Box=1$. We can assume $\Box=[0,l]$ for some $l\in\QQ_{\ge1}$. If $c<2$ then
$$
[0,cl]=[0,l]\cup[(c-1)l,cl]\subset\Bbb U_{\vertex}([0,l],c).
$$
If $c\ge2$ then $[0,(c-1)l]\cup[l,cl]=[0,cl]$ and, simultaneously, the inequality $l\ge1$ implies the mutually symmetric inclusions
\begin{align*}
&[0,(c-1)l]\subset\bigcup_{x\in[0,(c-1)l]\cap\ZZ^d}x+[0,l],\\
&[l,cl]\subset\bigcup_{x\in[0,(c-1)l]\cap\big((c-1)l+\ZZ^d\big)}x+[0,l].
\end{align*}

Consider the case $\dim\Box=d>1$. We can assume $\Box\subset\RR^d$. Without loss of generality we can further assume that $\Box=\prod_{i=1}^d[0,l_l]$ for some $l_i\in\QQ_{>1}$. In fact, one first applies a parallel
translation that moves a vertex of $\Box$ to $0$, then applies an
appropriate rational change of coordinates that transforms the primitive lattice edge vectors of $\Box$, emerging from $0$, into the standard basic vectors of $\RR^d$, and, finally, changes the lattice of reference to the integer lattice w.r.t. to the new coordinates. The new lattice is a parallel translate of a subgroup of the old copy of $\ZZ^d$. In particular, $\Bbb U_{\vertex}(\Box,c)$, constructed w.r.t. the `new $\ZZ^d$' is a subset of the one constructed w.r.t. to the `old $\ZZ^d$'. Also, the condition $\E(\Box)\ge1$ remains valid w.r.t to the new lattice of reference.

For $\delta\in\{0,1\}^d$ denote by $v^\delta(\Box)$ the vertex of $\Box$ whose $i$th coordinate is $0$ iff the $i$th component of $\delta$ is $0$. Pick $z=(z_1,\ldots,z_d)\in c\Box$. By the one-dimensional case, for every component $z_i$ we can fix $\delta_i\in\{0,1\}$ so that
$$
z_i\in\bigcup_{\xi\in\left[0,(c-1)l_i\right]\cap\big(\delta_i(c-1)l_i+\ZZ^d\big)}\xi+\left[0,l_i\right].
$$
Then
$$
z\in\bigcup_{x\in(c-1)\Box\cap\big(v^\delta\big((c-1)\Box\big)+\ZZ^d\big)}x+\Box,\qquad \delta=(\delta_1,\ldots,\delta_d).
$$

\medskip Now assume $\E(\Box)<1$. Without loss of generality we can assume $\dim\Box=1$ and, moreover, $\Box=[0,l]$. Pick an arbitrary $\varepsilon\in\QQ_{>0}$ with $\varepsilon<l^{-1}-1$ and let $c=2+\varepsilon$. Then $[0,(c-1)l]\cap\ZZ=\{0\}$ and $[0,(c-1)l]\cap\big((c-1)l+\ZZ\big)=\{(c-1)l\}$, and, consequently,
$$
\frac{cl}2\in[0,cl]\setminus\Bbb U_{\vertex}([0,l],c).
$$

\medskip (b) Notice that lattice segments ($d=1$) and lattice polygons ($d=2$) are vacuously $(d-1)$-convex-normal. So the statement includes
the known fact that all lattice segments and lattice polygons are integrally closed; see \cite[Corollary 2.54]{kripo}.

\medskip Let $P$ be a lattice $d$-polytope. Then
\begin{equation}\label{integralvertex}
v+\ZZ^d=cv+\ZZ^d=\ZZ^d\quad\text{for all}\ v\in\vertex(P)\ \text{and}\ c\in\NN.
\end{equation}

Assume $P$ is a lattice $d$-polytope, satisfying $\CN(d,d-1)$, and
let $c\in[2,d-1]_\NN$. Then, in view of (\ref{integralvertex}), for
every $z\in cP\cap\ZZ^d$ there exist $x\in(c-1)P\cap\ZZ^d$ and
$x_c\in x+P$ with $z=x+x_c$. Then, necessarily, $x_c\in P\cap\ZZ^d$,
and the descending induction from $c$ to $1$ implies
$z=x_1+\cdots+x_c$ with $x_1,\ldots,x_c\in P\cap\ZZ^d$.

Now assume $c\in\NN_{\ge d}$ and $z\in cP\cap\ZZ^d$. Then, by \cite[Theorem 2.52]{kripo} (an essentially equivalent result, but stated for the \emph{normalization of the polytopal monoid} of $P$ instead of the integral closure in $\ZZ^d$, is \cite[Corrolary 1.3.4]{brgutr}),
there exist a natural number $1\le c_0\le d-1$, a lattice point
$x_0\in c_0P\cap\ZZ^d$, and a family of lattice points $x_i\in P\cap\ZZ^d$, $i=1,\ldots,c-c_0$, such that
$z=x_0+x_1+\cdots+x_{c-c_0}$.
So the general case reduces to the case $c\le d-1$.
\end{proof}

\section{$\CN$ in dimension\ $d-1$ $\Longrightarrow$ boundary $\CN$ in dimension $d$}
For a polytope $P$ and a vertex $v\in\vertex(P)$ we let $\FF(P)^v$ denote the facets
$F$ of $P$ that are visible from $v$, i.~e., $v\notin F$.

For two polytopes $Q,P\subset\RR^d$ with $\dim Q=d-1$ and $\dim P=d$ the Euclidean width of $P$ w.r.t. $\aff(Q)$ will be denoted by $\width_Q(P)$.

For a $d$-polytope $P\subset\RR^d$, a facet $F\subset P$, and a real number $\varepsilon>0$ we define the \emph{$\varepsilon$-layer along $F$ inside $P$} to be the
polytope
\begin{align*}
F_P(\varepsilon)=\left\{x\in P\ :\
\|x,H_F\|\le\varepsilon\right\}.
\end{align*}
If $\varepsilon<\width_F(P)$ then $F_P(\varepsilon)$ has a
facet, different from $F$ and parallel to $F$. It will be denoted
by $F_P(\varepsilon)^+$.

\begin{definition}\label{BCN}
Assume $k\in\QQ_{\ge2}$ and $P\subset\RR^d$ is a rational
$d$-polytope. $P$ is said to be $k$-\emph{boundary-convex-normal}
if the following condition is satisfied for every $c\in[2,k]_\QQ$ and every $F\in\FF(P)$:

\begin{equation}
\tag{{\tiny{$\BCN(d,k)$}}}
\begin{aligned}
\big((cF)_{cP}\big)(\varepsilon_F)\subset\UU,\quad \varepsilon_F=\frac{\width_F(P)}{d+1}.
\end{aligned}
\end{equation}
\end{definition}

\begin{lemma}\label{pyramid}
Let  $d\in\NN_{\ge2}$, $k\in\QQ_{\ge2}$, and $\lambda\in\QQ_{>0}$.
Assume every rational $(d-1)$-polytope $Q$ with $\E(Q)\ge\frac
d{d+1}\lambda$ satisfies $\CN\left(d-1,k+\frac{k-1}d\right)$. Let
$P$ be a rational $d$-polytope with $\E(P)\ge\lambda$,
$w\in\vertex(P)$, and $F\in\FF(P)^w$. Then for the rational
$d$-pyramid $\Delta=\conv(w,F)$ and every $c\in[2,k]_\QQ$ we have
\begin{align*}
\big((cF)_{c\Delta}\big)(\varepsilon)\subset\UU,\qquad\varepsilon=\frac{\|w,H_F\|}{d+1}.
\end{align*}
\end{lemma}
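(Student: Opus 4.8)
The plan is to reduce the pyramid $\Delta=\conv(w,F)$ to a product situation by slicing it into layers parallel to $F$, and to cover each such layer using the $(d-1)$-dimensional convex-normality hypothesis applied to $F$ together with the parallelepiped-covering of Lemma \ref{cnormalvsnormal}(b) in the ``$w$-direction''. First I would set up coordinates so that $H_F=\{x_d=0\}$ and $w$ lies at height $\het(w)=\|w,H_F\|$ above $F$; for $t\in[0,\het(w)]$ the cross-section $\Delta\cap\{x_d=t\}$ is a homothetic copy of $F$ with factor $1-t/\het(w)$, and dilating by $c$ the cross-section of $c\Delta$ at height $ct$ is a copy of $cF$ scaled by the same factor. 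The region $U_{c\Delta}(cF,\varepsilon)$ with $\varepsilon=\het(w)/(d+1)$ is exactly the union of these cross-sections for $ct\in[0,\varepsilon]$, i.e.\ a truncated pyramid sitting in the bottom $1/(d+1)$ fraction of $c\Delta$.

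Next I would feed $F$ into the hypothesis. The edges of $F$ are edges of $P$, so $\E(F)\ge\lambda\ge\frac{d}{d+1}\lambda$ and $F$ satisfies $\CN(d-1,k+\frac{k-1}{d})$. The point of the shifted parameter $k'=k+\frac{k-1}{d}$ is this: a point of $U_{c\Delta}(cF,\varepsilon)$ at height $ct$ projects to a point of $cF$ scaled down by $1-t/\het(w)$, which we want to view as a point of $c'F$ for a slightly larger $c'$; the worst case $t=\varepsilon=\het(w)/(d+1)$ gives $c'=c\cdot\frac{d+1}{d}=c+\frac cd\le k+\frac kd$, and one checks $k+\frac kd\le k+\frac{k-1}{d}$ is \emph{false} — so instead I would be more careful: write the projected point as $\frac{c(1-t/\het(w))}{?}\,F$ and absorb the discrepancy into the lattice-translate bookkeeping, so that only a factor in $[2,k']$ is ever needed. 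Concretely, for each lattice translate $x'+F\subset c'F$ furnished by $\CN(d-1,k')$ on the base $F$ (with $x'\in(c'-1)F\cap((c'-1)v+\ZZ^{d-1})$, $v\in\vertex(F)$), I would lift it to the slab of $c\Delta$ lying above it: this slab is (affinely) a product of $x'+F$ with an interval in the $x_d$-direction whose lattice length is comparable to $\het(w)/(d+1)\cdot\frac{d}{d+1}$-scaled data, and I apply Lemma \ref{cnormalvsnormal}(b) to the rational parallelepiped $(x'+F)\times(\text{interval})$ — after an $SL_d(\ZZ)$ change of coordinates making it a box with all edge lengths $\ge1$ — to cover it by translates $x+P'$ of the unit-scaled box, where $P'=F\times(\text{unit interval in }w\text{-direction})$ is a parallelepiped contained in $P$. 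Finally, each vertex $v$ of $F$ is a vertex of $P$, and a translate $x+P'$ with $P'\subset P$ and the right lattice-coset condition is exactly one of the sets $x+P$ on the right-hand side; collecting over all $t$, all base-translates $x'$, and all base-vertices $v$ gives the desired covering of $U_{c\Delta}(cF,\varepsilon)$.

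The main obstacle will be the interface between the $(d-1)$-dimensional covering of the base and the one-dimensional (parallelepiped) covering in the $w$-direction: one must show these are \emph{compatible}, i.e.\ the product of a lattice translate of $F$ inside $c'F$ with a lattice interval inside the fiber is again a lattice translate of a parallelepiped $P'\subseteq P$ that is among the allowed ``$x+P$'' on the right (in particular, that $P'$ really sits inside $P$, which uses $F\in\FF(P)^w$ so that $\conv(w,F)\subseteq P$ and hence the unit box in the $w$-direction over $F$ lies in $\Delta\subseteq P$). A secondary delicate point is choosing the height subdivision of $[0,\varepsilon]$ into finitely many slabs and verifying that the scaling factors $c'$ arising at slab boundaries all stay within $[2,k+\frac{k-1}{d}]$; this is precisely why the hypothesis is stated with the enlarged parameter $k+\frac{k-1}{d}$ and the edge bound relaxed to $\frac{d}{d+1}\lambda$, and getting the arithmetic of these two concessions to match the geometry of a $1/(d+1)$-thick pyramidal layer is the crux of the lemma.
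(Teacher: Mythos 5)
You have correctly located the central tension---referencing the base $F$ itself forces dilation factors as large as $c\cdot\frac{d+1}{d}=c+\frac{c}{d}$, which exceeds the hypothesized range $\left[2,k+\frac{k-1}{d}\right]$---but the proposal never resolves it, and the workaround you sketch does not work. The missing idea is to apply the $(d-1)$-dimensional hypothesis not to $F$ but to the \emph{lid} $G=U_\Delta(F,\varepsilon)^+$ of the small truncated pyramid, a homothetic copy of $F$ with factor $\frac{d}{d+1}$; this is exactly why the hypothesis is stated for polytopes $Q$ with the relaxed bound $\E(Q)\ge\frac{d}{d+1}\lambda$. Relative to $G$, the lid $K=U_{c\Delta}(cF,\varepsilon)^+$ has homothety factor $c_1=\frac{cd+c-1}{d+1}\cdot\frac{d+1}{d}=c+\frac{c-1}{d}\le k+\frac{k-1}{d}$, which \emph{is} in range, and $c_1-1=\frac{d+1}{d}(c-1)$ identifies $(c_1-1)G$ with $(c-1)F$, so $\CN\left(d-1,k+\frac{k-1}{d}\right)$ for $G$ covers $K$ by translates $x+G$ with $x\in(c-1)F\cap\left((c-1)v+\ZZ^d\right)$, $v\in\vertex(F)\subset\vertex(P)$. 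No slicing into further cross-sections is needed: for any $z\in U_{c\Delta}(cF,\varepsilon)$ the ray from the apex $cw$ through $z$ meets $K$ at a point $z_K\in x+G$, and the small pyramid $\left(z_K+\RR_+(-w+\Delta)\right)\cap U_{c\Delta}(cF,\varepsilon)$ contains $z$ and lies inside $x+U_\Delta(F,\varepsilon)\subset x+\Delta\subset x+P$. Only the single top cross-section $K$ ever invokes the inductive hypothesis; everything below it is swept up by translates of the whole truncated pyramid $U_\Delta(F,\varepsilon)$.

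Your substitute for this step---covering each horizontal slab as a product of a translate of $F$ with an interval and invoking Lemma \ref{cnormalvsnormal}(b)---fails on several counts. The slabs of a pyramid are frusta, not products; the prism of $F$ with an interval is not a parallelepiped unless $F$ is one, so Lemma \ref{cnormalvsnormal}(b) does not apply to it after any change of coordinates; and, most seriously, such a prism of any positive height over all of $F$ is in general \emph{not} contained in $P$ (already for $P=\Delta$ a simplex, the points of the prism lying over the vertices of $F$ leave $\Delta$ immediately, since the pyramid tapers toward $w$), so the sets $x+P'$ you produce are not contained in the allowed translates $x+P$. Your closing paragraph in effect concedes that the arithmetic of the two concessions ($\frac{d}{d+1}\lambda$ and $k+\frac{k-1}{d}$) is left unmatched to the geometry of the $\frac{1}{d+1}$-thick layer; that matching is precisely the content of the lemma, and it is supplied by the lid-plus-ray argument above, not by a layer-by-layer product decomposition.
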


\begin{proof}
We can assume $P\subset\RR^d$. Denote:
$$\Pi=\bigcup_{ {\tiny\begin{matrix}
&v\in\vertex(F)\\
&x\in(c-1)F\bigcap\left((c-1)v+\ZZ^d\right)
\end{matrix}
}}x+F_\Delta(\varepsilon).
$$

Since $\Pi\subset\UU$, it is enough to show
\begin{equation}\label{inPi}
\big((cF)_{c\Delta}\big)(\varepsilon)\subset\Pi.
\end{equation}

Let $G=F_\Delta(\varepsilon)^+\in\FF(F_\Delta(\varepsilon))$.
Then $G$ is a homothetic image of $F$ with factor $d/(d+1)$. In
particular, $G$ is a rational $(d-1)$-polytope whose every edge
has lattice length $\ge\frac d{d+1}\lambda$. By the assumption, $G$ satisfies $\CN\left(d-1,k+\frac{k-1}d\right)$.

The rational polytope $K=\big((cF)_{c\Delta}\big)(\varepsilon)^+$ is a
homothetic image of $F$ with factor $\frac{cd+c-1}{d+1}$. So $K$
is a homothetic image of $G$ with factor
$$
c_1=\frac{cd+c-1}{d+1}\cdot\frac{d+1}d=c+\frac{c-1}d\in\left[2+\frac1d,\
k+\frac{k-1}d\right]_\QQ:
$$
\begin{figure}[htb]
\includegraphics[height=3in,width=4.3in,]{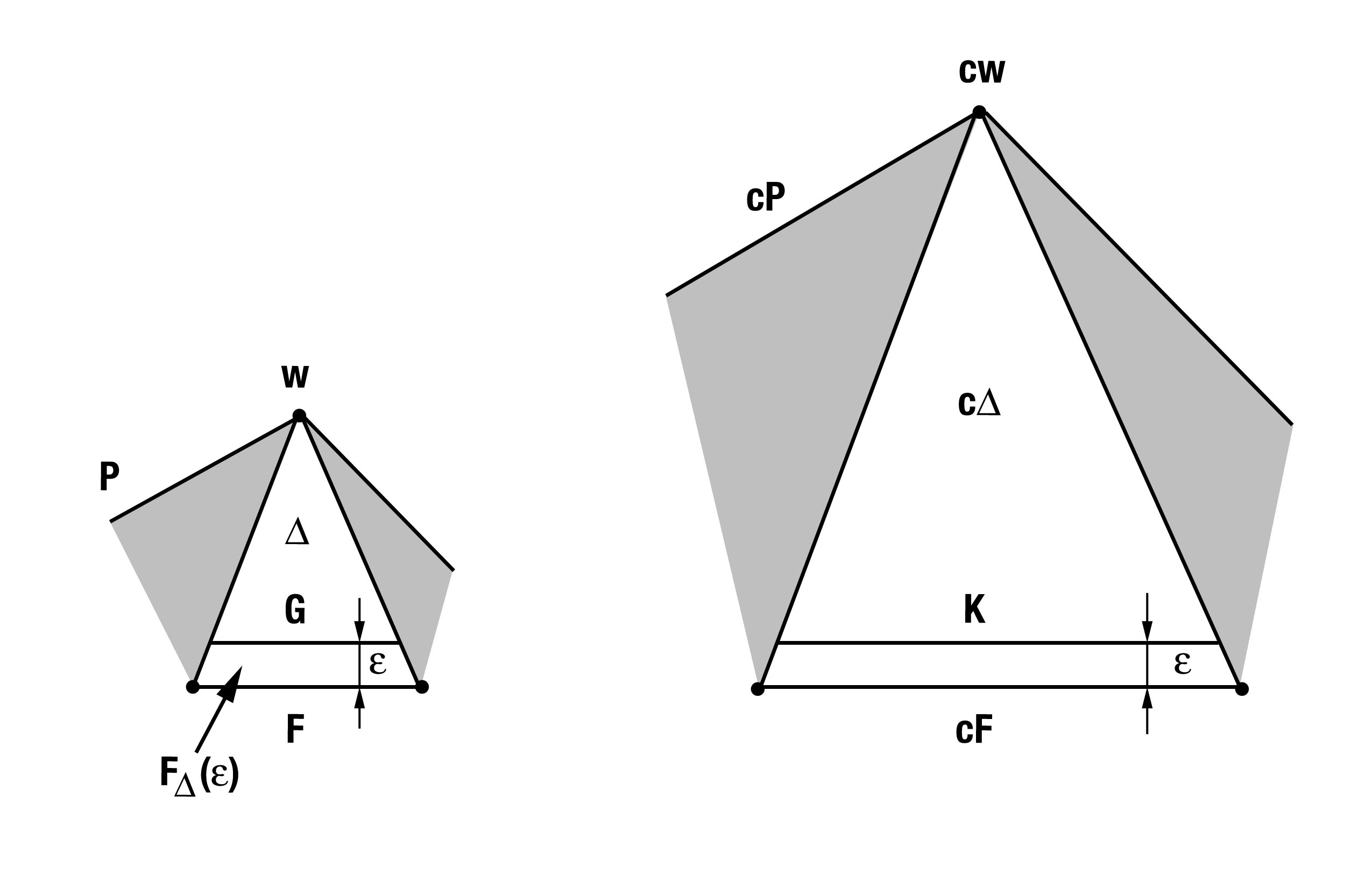}
\end{figure}

\noindent The polytope $(c-1)F$ is a rational homothetic image of $G$ with
factor $\frac{(d+1)(c-1)}d$. In particular, $(c_1-1)G=(c-1)F$ and, by the inductive assumption on rational $(d-1)$ polytopes with lattice edge lengths $\ge\frac d{d+1}\lambda$, we have
\begin{align*}
&\bigcup_{ {\tiny\begin{matrix}
&v\in\vertex(F)\\
&x\in(c-1)F\bigcap\left((c-1)v+\ZZ^d\right)
\end{matrix}
}}x+G=K,
\end{align*}
or, equivalently, $K\subset\Pi$. To put in other words, the lid of
the truncated pyramid $\big((cF)_{c\Delta}\big)(\varepsilon)$ is covered by
the relevant parallel translates of the lid of the smaller
truncated pyramid $F_\Delta(\varepsilon)$.

Pick a point $z\in\big((cF)_{c\Delta}\big)(\varepsilon)$. The ray
$cw+\RR_+(z-cw)$ intersects $K$ at some point $z_K$. Let $z_K\in
x+G$ for some $x$ as in the index set in the definition of $\Pi$. Then
$$
\big(z_K+\RR_+(-w+\Delta)\big)\cap
\big((cF)_{c\Delta}\big)(\varepsilon)=z_K+\frac1{d+1}(-w+\Delta)
$$
and
$$
F_\Delta(\varepsilon)=G+\frac1{d+1}(-w+\Delta).
$$
Therefore,
$$
z\in z_K+\frac1{d+1}(-w+\Delta)\subset
x+G+\frac1{d+1}(-w+\Delta)=x+F_\Delta(\varepsilon)\subset\Pi:
$$
\begin{figure}[htb]
\includegraphics[height=2.5in,width=4.5in,]{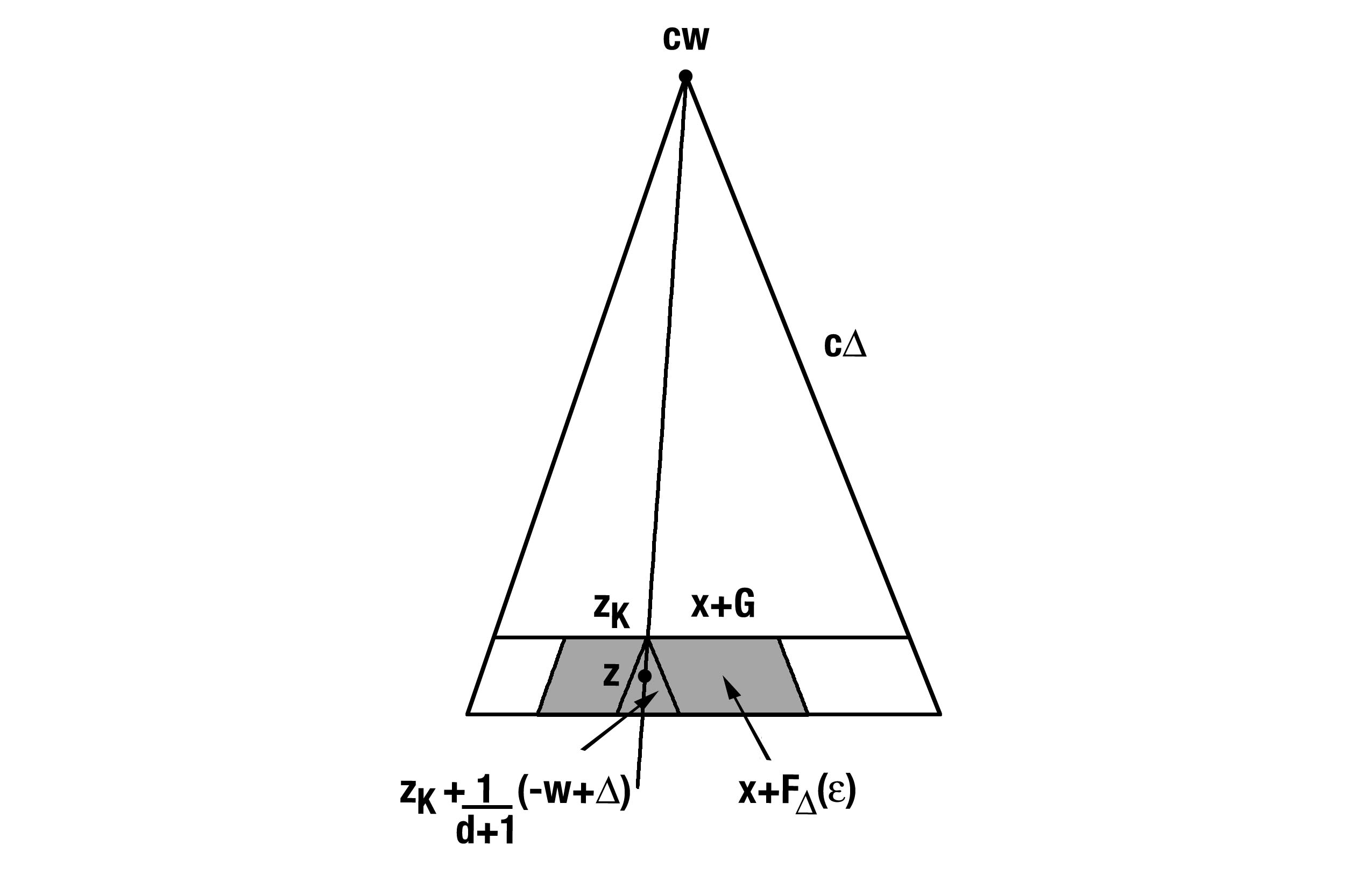}
\end{figure}

\end{proof}

\begin{remark}\label{whyrational}
(a) In the proof of Lemma \ref{pyramid} there are two places that
make it necessary to involve rational polytopes in our induction on dimension: the polytope $G$, to which the assumption on
$(d-1)$-polytopes is applied, is usually not lattice even if $P$
is, and the number $c_1$ is usually not an integer.

(b) If one defined the convex normality by the `dual' equalities:
\begin{align*}
cP=\bigcup_{ {\tiny\begin{matrix}
&v\in\vertex(P)\\
&x\in P\bigcap\left(v+\ZZ^d\right)
\end{matrix}
}} x+(c-1)P\quad\text{for all}\quad&c\in[2,k]_\QQ,
\end{align*}
then the lower bound for the analogue of $c_1$ in the proof of
Lemma \ref{pyramid} would have been $2-\frac1{d+1}$, blocking the
possibility for induction on $d$.
\end{remark}

\begin{lemma}\label{homotheticlayers}
Let  $d\in\NN_{\ge2}$, $k\in\QQ_{\ge2}$, and $\lambda\in\QQ_{>0}$.
If every rational $(d-1)$-polytope $Q$ with $\E(Q)\ge\frac
d{d+1}\lambda$ satisfies $\CN\left(d-1,k+\frac{k-1}d\right)$ then
every rational $d$-polytope $P$ with $\E(P)\ge\lambda$ satisfies
$\BCN(d,k)$.
\end{lemma}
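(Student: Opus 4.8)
The plan is to deduce $\BCN(d,k)$ for $P$ by dissecting each boundary layer $U_{cP}(cF,\varepsilon_F)$ into pieces that are each handled by Lemma \ref{pyramid}. Fix $c\in[2,k]_\QQ$ and $F\in\FF(P)$, and set $\varepsilon_F=\width_F(P)/(d+1)$. The natural decomposition comes from the vertices $w\in\vertex(P)$ that are visible from $F$ (equivalently $F\in\FF(P)^w$, i.e. $w\notin F$): for each such $w$ form the pyramid $\Delta_w=\conv(w,F)$. First I would check that these pyramids cover $P$ near $F$ — more precisely that
\[
U_P(F,\varepsilon_F)=\bigcup_{w:\,F\in\FF(P)^w} U_{\Delta_w}(F,\varepsilon_F),
\]
and, scaling by $c$, that $U_{cP}(cF,\varepsilon_F)$ is the union of the $U_{c\Delta_w}(cF,\varepsilon_F)$. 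This is a purely convex-geometric fact: every point $x\in P$ lies on some segment from a visible vertex $w$ through a point of $F$ (extend the ray from any point of $F$ through $x$ until it exits $P$ at a face not meeting$F$, hence at a vertex visible from $F$ after further subdividing if that exit face is higher-dimensional — a Carathéodory-type triangulation of the "shadow boundary" as in the proof of Lemma \ref{cn-alternative} makes this rigorous), and the layer condition $\|x,H_F\|\le\varepsilon_F$ transfers to the sub-pyramid.

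The second step is bookkeeping on the width parameter. Lemma \ref{pyramid} applied to the pyramid $\Delta_w$ gives the desired covering of $U_{c\Delta_w}(cF,\varepsilon_w)$ where $\varepsilon_w=\|w,H_F\|/(d+1)$, and since $\|w,H_F\|\le\width_F(P)$ we have $\varepsilon_w\le\varepsilon_F$ — which is the wrong direction. So instead I would run Lemma \ref{pyramid} with the role of $\width_F(P)$ played correctly: note that $U_{\Delta_w}(F,\varepsilon_F)$ is itself a truncated pyramid over $F$, and the relevant "full" pyramid whose apex-to-$H_F$ distance equals $\width_F(P)$ is obtained by scaling $\Delta_w$ up from $F$ until the apex reaches distance $\width_F(P)$; call it $\widehat\Delta_w$. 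Then $\widehat\Delta_w\subset P$ fails in general, so the cleaner route is: observe $\varepsilon_F=\width_F(P)/(d+1)\le\|w,H_F\|\cdot\frac{1}{d+1}\cdot\frac{\width_F(P)}{\|w,H_F\|}$ is exactly $\varepsilon_w$ scaled, and $U_{c\Delta_w}(cF,\varepsilon_F)\subseteq U_{c\Delta_w}(cF,\varepsilon_w)$ precisely when $\varepsilon_F\le\varepsilon_w$, i.e. $\width_F(P)\le\|w,H_F\|$ — false. The honest fix, which I expect to be the real content, is that Lemma \ref{pyramid}'s conclusion scales: rescaling $\Delta_w$ about $F$ by the factor $t=\width_F(P)/\|w,H_F\|\ge1$ produces a pyramid $\Delta_w'$ with apex at distance $\width_F(P)$, and although $\Delta_w'\not\subseteq P$, one only needs the covering statement which is affine-invariant in $\Delta_w'$ alone; but then the lattice translates $x\in(c-1)P\cap((c-1)v+\ZZ^d)$ on the right-hand side of $\BCN$ must still do the job, and they will, because $\Delta_w'$ near its base $F$ agrees with $P$ near $F$ out to depth $\varepsilon_F$ once $t\ge1$. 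So the step is: replace $\Delta_w$ by the pyramid with the same base $F$ and apex distance $\width_F(P)$, apply Lemma \ref{pyramid} to it (its edges still have lattice length $\ge\lambda$ — here I must confirm that stretching the pyramid does not shrink edge lengths, which holds since $t\ge 1$), and intersect with $U_{cP}(cF,\varepsilon_F)$.

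The third step is to assemble: for each visible $w$ we get
\[
U_{c\Delta_w}(cF,\varepsilon_F)\subset\bigcup_{\substack{v\in\vertex(P)\\ x\in(c-1)P\cap((c-1)v+\ZZ^d)}}x+P,
\]
and taking the union over all $w$ with $F\in\FF(P)^w$ yields, by the covering established in step one, the containment $U_{cP}(cF,\varepsilon_F)\subset\bigcup x+P$. Since $c\in[2,k]_\QQ$ and $F\in\FF(P)$ were arbitrary, this is exactly $\BCN(d,k)$. I expect the main obstacle to be step two — pinning down the correct pyramid to feed into Lemma \ref{pyramid} so that both the edge-length hypothesis $\E\ge\lambda$ survives and the layer $U_{cP}(cF,\varepsilon_F)$ is genuinely contained in the union of the $U_{c\Delta_w}(cF,\varepsilon_F)$; the width normalization $\varepsilon_F=\width_F(P)/(d+1)$ versus the per-vertex $\varepsilon_w=\|w,H_F\|/(d+1)$ is where an off-by-a-scaling error would creep in, and getting it right is the crux. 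Everything else is the Carathéodory-style decomposition already used for Lemma \ref{cn-alternative} together with routine monotonicity of layers under homothety.
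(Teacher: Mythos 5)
Your decomposition is the wrong one, and your first step already fails. You propose to cover the layer $U_{cP}(cF,\varepsilon_F)$ by the truncated pyramids $U_{c\Delta_w}(cF,\varepsilon_F)$, $\Delta_w=\conv(w,F)$, taken over all vertices $w$ with $F\in\FF(P)^w$. This covering claim is false, and the long-edge hypothesis does not rescue it. Take $d=2$, $P=\conv\bigl((0,0),(1,0),(100,1),(-99,1)\bigr)$ and $F$ the bottom edge $[(0,0),(1,0)]$, so $\width_F(P)=1$ and $\varepsilon_F=1/3$. At height $h$ the cross-section of $P$ is $[-99h,\,1+99h]$, while the cross-sections of the two pyramids $\conv((100,1),F)$ and $\conv((-99,1),F)$ are $[100h,\,1+99h]$ and $[-99h,\,1-100h]$; already for $h=1/10\le\varepsilon_F$ the point $(0,1/10)$ lies in $U_P(F,\varepsilon_F)$ but in neither pyramid. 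Dilating the whole configuration by a large integer makes every edge as long as you wish without changing these ratios, so no hypothesis of the form $\E(P)\ge\lambda$ restores the covering. Your second step is also not repairable as described: stretching $\Delta_w$ to a pyramid $\Delta_w'$ whose apex is at distance $\width_F(P)$ takes you outside $P$, and the proof of Lemma \ref{pyramid} genuinely needs $\Delta=\conv(w,F)$ with $w\in\vertex(P)$ and $\Delta\subset P$ (the inclusion $\Pi\subset\Sigma$ there rests on $U_\Delta(F,\varepsilon)\subset P$ and on the translation vectors being indexed by lattice points attached to vertices of $P$).

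The decomposition that works is dual to yours. Fix $F$ and choose a \emph{single} vertex $w$ realizing $\|w,H_F\|=\width_F(P)$; for this pair the $\varepsilon$ of Lemma \ref{pyramid} equals $\varepsilon_F$ exactly, so there is no mismatch to repair. Then decompose $cP$ from the apex $cw$ into the pyramids $c\Delta(G)=\conv(cw,cG)$ over \emph{all} facets $G\in\FF(P)^w$. The complement in $cP$ of the homothetic image of $cP$ centered at $cw$ with factor $\frac{cd+c-1}{cd+c}$ is precisely $\bigcup_{G\in\FF(P)^w}U_{c\Delta(G)}(cG,\varepsilon_{w,G})$ with $\varepsilon_{w,G}=\|w,H_G\|/(d+1)$, and it contains $U_{cP}(cF,\varepsilon_F)$ because that homothety moves $H_{cF}$ by exactly $\varepsilon_F$. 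Each piece is then handled by Lemma \ref{pyramid}. The idea you are missing is that the part of the layer along $cF$ that sticks out of the pyramid over $F$ is absorbed by the layers along the \emph{other} facets visible from $w$ --- not by pyramids over $F$ with other apexes.
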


\begin{proof}
Let $P$ be a rational $d$-polytope with edge lengths
$\ge\lambda$, $F\in\FF(P)$, and
$$
\varepsilon_F=\frac{\width_F(P)}{d+1}.
$$
Fix a vertex $w\in\vertex(P)\setminus F$ with
$\|w,H_F\|=\width_F(P)$. Such exists because $\width_F(P)=\max_{\vertex(P)}\big(\|v,H_F\|\big)$.

For every facet $G\in\FF(P)^w$ denote
$$
\Delta(G)=\conv(w,G)\quad\text{and}\quad\varepsilon_{w,G}=\frac{\|w,H_G\|}{d+1}.
$$

By Lemma \ref{pyramid}, for every $c\in[2,k]_\QQ$ we have the inclusion
\begin{align*}
\bigcup_{G\in\FF(P)^w}(cG)_{c\Delta(G)}(\varepsilon_{w,G})\subset\UU.
\end{align*}
But for every $c\in[2,k]_\QQ$ we also have
$$
(cF)_{cP}(\varepsilon_{w,F})=(cF)_{cP}(\varepsilon_F)\subset
cP\setminus\mathsf H(cP))=\bigcup_{G\in\FF(P)^w}(cG)_{c\Delta(G)}(\varepsilon_{w,G}),
$$
where $\mathsf H(cP)$ denotes the homothetic image of $cP$,
centered at $cw$ and with factor $\frac{cd+c-1}{cd+c}$.
The inclusion in the middle, essentially, amounts to the convexity of
$cP$:

\begin{figure}[htb]
\includegraphics[height=3.3in,width=4.3in,]{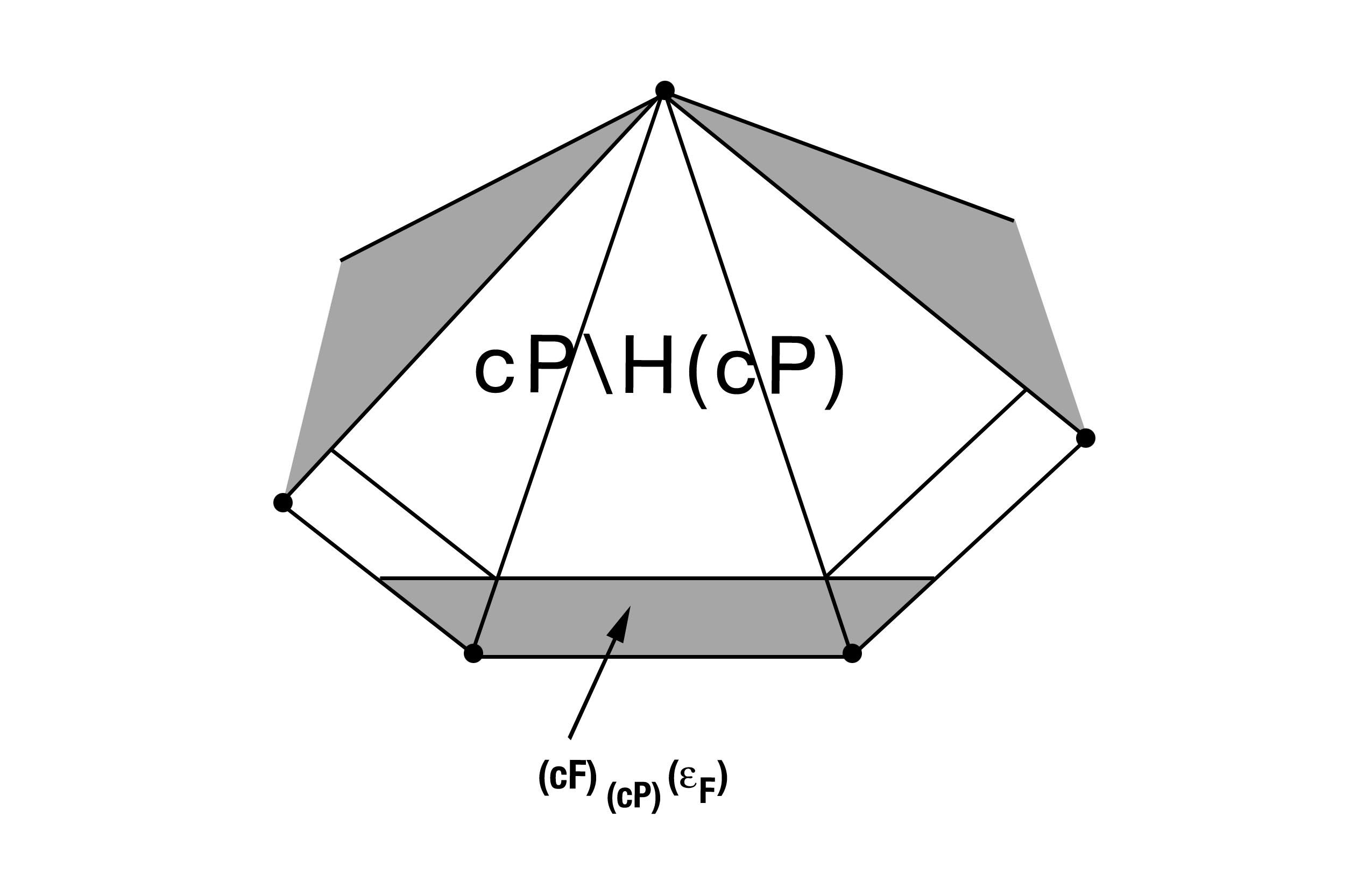}
\end{figure}

\end{proof}

\section{Deep parallelepipedal covers from vertices}\label{deeppcovers}

Fix a rational $d$-polytope $P\subset\RR^d$, a rational number $l\ge1$, and a vertex $v\in P$.

For a system of positive rational numbers
$\bar\varepsilon=\big(\varepsilon_F\big)_{F\in\FF(P)^v}$
we denote
$$
P-\bar\varepsilon\cdot\FF(P)^v=\overline{P\setminus\bigcup_{\FF(P)^v}F_P(\varepsilon_F)},
$$
the `bar' on the right hand side referring to the closure in the Euclidean topology.

\medskip Pick a simplicial $d$-cone of the form
$C=\RR_+(v_1-v)+\cdots+\RR_+(v_d-v)\subset\RR^d$ with $v_1,\ldots,v_d\in\vertex(P)$.

Let $x_i$ be the primitive integer vector in the direction of
$v_i-v$ and $\Box(C)\subset C$ be the
parallelepiped, spanned over $0$ by the $x_i$, $i=1,\ldots,d$. Denote by
$P(\Box(C))$ the union of the integral parallel translates of $\Box(C)$
of type $v+\sum_{i=1}^da_ix_i+\Box(C)$, $a_1,\ldots,a_d\in\ZZ_+$, which fall inside $P$.

\begin{lemma}\label{pcover}
If $\E(P)\ge ld(d+1)$ and $\varepsilon_F=\frac{\width_F(P)}{l(d+1)}$ for every $F\in\FF(P)^v$ then
$$
(P-\bar\varepsilon\cdot\FF(P)^v)\cap C\ \subset\ P(\Box(C)).
$$
\end{lemma}

\begin{proof}
By shifting $P$ by $-v$, we can assume $v=0$.

Pick $x\in\big(P-\bar\varepsilon\cdot\FF(P)^0\big)\cap C$. There exist $b_1,\ldots b_d\in\ZZ_+$ with $x\in\sum_{i=1}^db_ix_i+\Box(C)$. We want to show $\sum_{i=1}^db_ix_i+\Box(C)\subset P$. By the choice of $x$, it is enough to show that $\width_F\big(\Box(C)\big)\le\varepsilon_F$ for every $F\in\FF(P)^0$:

\begin{figure}[htb]
\includegraphics[height=2.8in,width=4.1in,]{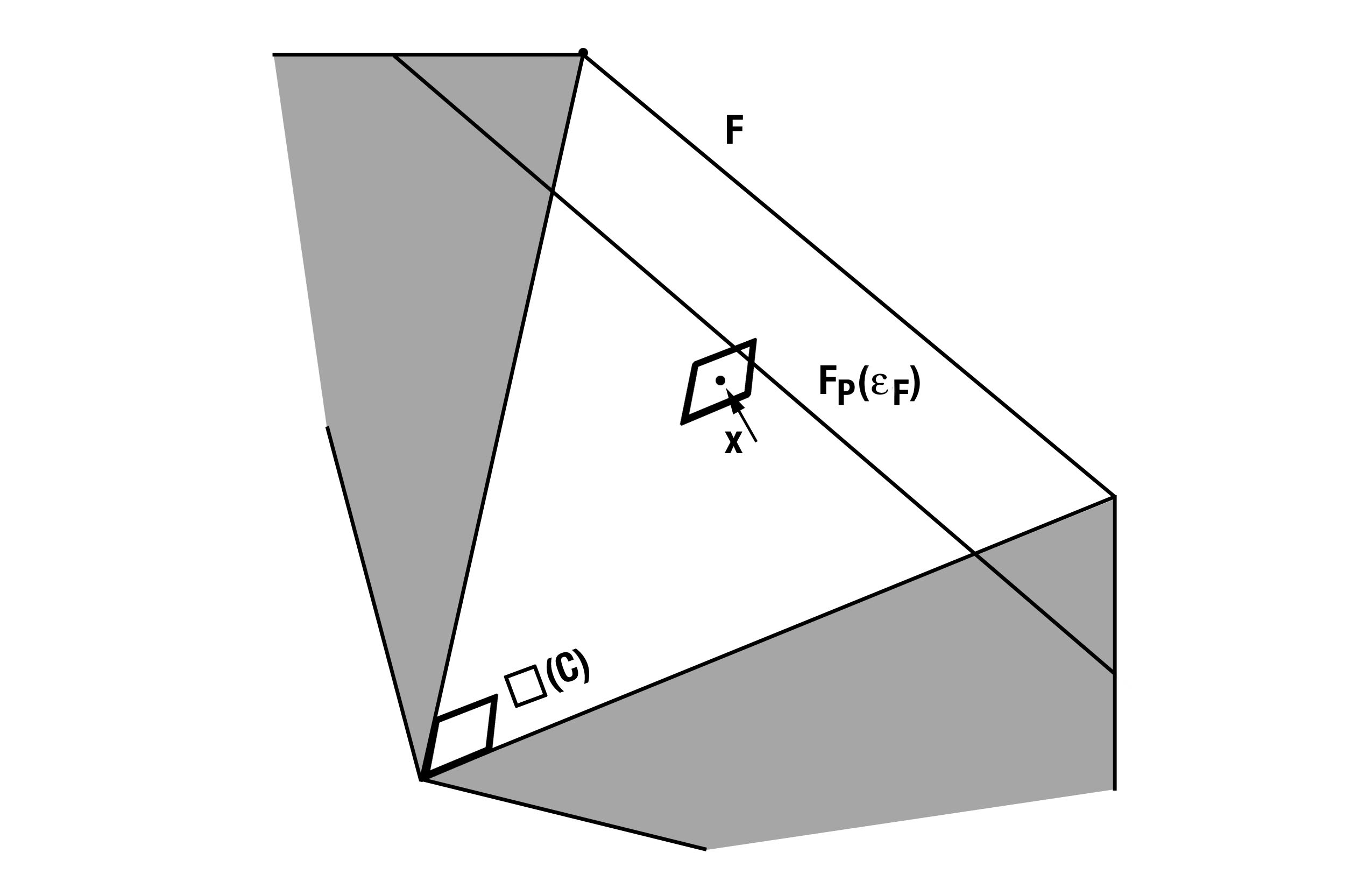}
\end{figure}

Consider the simplices $\Delta_1=\conv(0,x_1,\ldots,x_d)$ and $\Delta_2=\conv(0,v_1,\ldots,v_d)$. Then $\Box(C)\subset d\Delta_1$ and $ld(d+1)\Delta_1\subset\Delta_2$. Therefore, for every $F\in\FF(P)^0$ we have
\begin{align*}
\width_F\big(\Box(C)\big)\le\width_F(d\Delta_1)\le\frac{\width_F(\Delta_2)}{l(d+1)}\le\frac{\width_F(P)}{l(d+1)}=\varepsilon_F.
\end{align*}
\end{proof}

\begin{corollary}\label{pcoverweak}
In the situation of Lemma \ref{pcover}, the union of all lattice parallelepipeds inside $P$ contains
$P-\bar\varepsilon\cdot\FF(P)^v$.
\end{corollary}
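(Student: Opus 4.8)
The plan is to derive the statement from Lemma \ref{pcover}, applied to the simplicial cones of a triangulation of the corner cone of $P$ at $0$. Keeping the notation of Lemma \ref{pcover} (so in particular $\E(P)\ge l\dim P(\dim P+1)$), let $C_0=\RR_+P$ be the corner cone of $P$ at the vertex $0$. Since $0\in\vertex(P)$ and $\dim P=d$, the cone $C_0$ is a pointed full-dimensional rational $d$-cone and $P\subset C_0$. Its extreme rays are the directions of the edges of $P$ issuing from $0$, so each of them passes through a vertex of $P$ adjacent to $0$.

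First I would triangulate $C_0$ without introducing any rays beyond its own extreme ones. For this, intersect $C_0$ with a suitable affine hyperplane $H$ so that $Q=C_0\cap H$ is a polytope whose vertices are the intersection points of $H$ with the extreme rays of $C_0$; triangulate $Q$ into simplices whose vertices lie among the vertices of $Q$ (the standard pulling triangulation; compare the use of the Carath\'eodory Theorem and \cite[Thm. 1.51]{kripo} in the proof of Lemma \ref{cn-alternative}); and cone these simplices back from $0$. This yields $C_0=C_1\cup\dots\cup C_N$ with each $C_j$ a simplicial $d$-cone spanned by a system $v_{j,1}-0,\dots,v_{j,d}-0$ with $v_{j,1},\dots,v_{j,d}\in\vertex(P)$ and $\rank_\QQ(v_{j,1}-0,\dots,v_{j,d}-0)=d$ --- exactly the type of cone covered by Lemma \ref{pcover}.

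Next, for every $j$ Lemma \ref{pcover} gives $P(0,\bar\varepsilon)\cap C_j\subset P(0,C_j)$. Since $P(0,\bar\varepsilon)\subset P\subset C_0=C_1\cup\dots\cup C_N$, intersecting with the $C_j$ and taking the union over $j$ gives $P(0,\bar\varepsilon)\subset\bigcup_{j=1}^N P(0,C_j)$. By its very definition, each $P(0,C_j)$ is a union of parallelepipeds of the form $\sum_i a_ix_i^{(j)}+\Box(C_j)$, $a_i\in\ZZ_+$, that are contained in $P$; since the extremal vectors $x_i^{(j)}$ are primitive integer vectors and $0\in\vertex(P)$ is a lattice point, all the vertices of such a parallelepiped lie in $\ZZ^d$, that is, it is a lattice parallelepiped inside $P$. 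Hence the union of all lattice parallelepipeds inside $P$ contains every $P(0,C_j)$, and therefore contains $P(0,\bar\varepsilon)$.

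The only step that is not purely formal is the triangulation of $C_0$ using nothing but its own extreme rays, which I expect to be the (minor) crux: it rests on the classical fact that every polytope admits a triangulation into simplices all of whose vertices are vertices of the polytope. Everything else is a direct bookkeeping of Lemma \ref{pcover} over the pieces $C_j$.
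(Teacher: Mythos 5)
Your proof is correct and follows essentially the same route as the paper, which likewise covers $\RR_+P$ by simplicial $d$-cones spanned by its extremal rays (Carath\'eodory's theorem for cones, or a triangulation) and then applies Lemma \ref{pcover} cone by cone. Your added observation that each parallelepiped $\sum_i a_ix_i+\Box(C_j)$ has vertices in $\ZZ^d$ because the $x_i$ are primitive integer vectors is exactly the (implicit) reason the pieces $P(0,C_j)$ consist of \emph{lattice} parallelepipeds.
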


\begin{proof}
This follows from Lemma \ref{pcover} and the existence of a cover
of the form $\RR_+(P-v)=\bigcup_JC_j$, where the $C_j\subset\RR^d$, $j\in J$, are
\emph{simplicial} $d$-cones, spanned by extremal generators of the cone
$\RR_+(P-v)$ -- \emph{Carath\'eodory Theorem} for cones; see \cite[Thm.
1.55]{kripo}. One can even choose the cover to be a triangulation
of $\RR_+P$; see \cite[Thm. 1.54]{kripo}, \cite[Prop.
1.15(i)]{ziegler}.
\end{proof}

\section{Recursion rules for $\CN$} Let $d\in\NN$,
$k\in\QQ_{\ge2}$, and $P$ denote a general rational $d$-polytope.
Define:
\begin{align*}
&\cn(d,k)=\inf\big(l\in\QQ\ |\ \E(P)\ge l\ \Longrightarrow\ P\
\text{satisfies}\ \CN(d,k)\big),\\
&\bcn(d,k)=\inf\big(l\in\QQ\ |\ \E(P)\ge l\ \Longrightarrow\ P\
\text{satisfies}\ \BCN(d,k)\big).
\end{align*}

It is not a priori clear that these are finite numbers. What makes
them finite and, in fact, the whole strategy work is the following
recursion rules:

\begin{lemma}\label{inequalities} For $d\in\NN_{\ge2}$ and
$k\in\QQ_{\ge2}$ we have:

\begin{equation}
\tag{a} \cn(1,k)\le1.
\end{equation}
\begin{equation}
\tag{b} \bcn(d,k)\le\frac{d+1}d\cn\left(d-1,k+\frac{k-1}d\right),
\end{equation}
\begin{equation}
\cn(d,k)\le\tag{c} \max\big(kd(d+1),\bcn(d,k)\big),
\end{equation}
\end{lemma}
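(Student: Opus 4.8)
Parts (a) and (b) are repackagings of earlier results. For (a), a rational $1$-polytope is a rational segment, hence a $1$-dimensional rational parallelepiped; if $\E(P)\ge1$, Lemma \ref{cnormalvsnormal}(b) gives that the equality in Definition \ref{convexnormal} holds for every $c\in\QQ_{\ge1}$, in particular for $c\in[2,k]_\QQ$, so $l=1$ belongs to the set defining $\cn(1,k)$ and $\cn(1,k)\le1$. For (b), the set defining $\cn(d-1,\cdot)$ is upward closed in $\QQ$, so for each rational $l_1>\cn\bigl(d-1,k+\tfrac{k-1}{d}\bigr)$ every rational $(d-1)$-polytope $Q$ with $\E(Q)\ge l_1$ satisfies $\CN\bigl(d-1,k+\tfrac{k-1}{d}\bigr)$; applying Lemma \ref{homotheticlayers} with $\lambda=\tfrac{d+1}{d}l_1\in\QQ_{>0}$ (so that $\tfrac{d}{d+1}\lambda=l_1$) shows every rational $d$-polytope $P$ with $\E(P)\ge\lambda$ satisfies $\BCN(d,k)$, hence $\bcn(d,k)\le\tfrac{d+1}{d}l_1$; now let $l_1$ decrease to $\cn\bigl(d-1,k+\tfrac{k-1}{d}\bigr)$.

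For (c), fix a rational $l>\max\bigl(kd(d+1),\bcn(d,k)\bigr)$, a rational $d$-polytope $P$ with $\E(P)\ge l$, and $c\in[2,k]_\QQ$; it suffices to prove the inclusion ``$\subseteq$'' in $\CN(d,k)$, the reverse being automatic by Remark \ref{cncomments}(a). Denote by $\Sigma$ the right-hand union in $\CN(d,k)$. Since $l>\bcn(d,k)$, $P$ satisfies $\BCN(d,k)$, so all boundary layers $U_{cP}\bigl(cF,\width_F(P)/(d+1)\bigr)$, $F\in\FF(P)$, are contained in $\Sigma$.

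Next, fix $v\in\vertex(P)$ and a triangulation of the tangent cone of $cP$ at $cv$ (equivalently of $P$ at $v$) into simplicial cones $C$ spanned by systems of edge directions $v_i-v$ of $P$ at $v$ (with $v_i\in\vertex(P)$); let $x_i$ be the primitive lattice vector along $v_i-v$. Apply Lemma \ref{pcover} to the $d$-polytope $cP$ at $cv$ with parameter $k$; this is legitimate because $\E(cP)=c\,\E(P)\ge cl>kd(d+1)=k\dim(cP)\bigl(\dim(cP)+1\bigr)$, and the resulting excluded layer widths $\dfrac{c\,\width_F(P)}{k(d+1)}\le\dfrac{\width_F(P)}{d+1}$ are contained in the $\BCN$ layers above. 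It follows that $cP$ is the union of those $\BCN$ layers together with all lattice parallelepipeds $\Box_{\mathbf a}=cv+\sum_i a_ix_i+\Box(C)\subseteq cP$, where $\mathbf a=(a_i)\in\ZZ_+^d$ and $C$ runs over the triangulation. Hence it remains to show that each such $\Box_{\mathbf a}$ is contained in $x+P$ for some $x\in(c-1)P\cap\bigl((c-1)v+\ZZ^d\bigr)$.

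The heart of the matter is the following splitting claim: for each $\mathbf a\in\ZZ_+^d$ with $\Box_{\mathbf a}\subseteq cP$ there is $\mathbf b\in\ZZ_+^d$, $\mathbf b\le\mathbf a$, with (I) $v+\sum_i b_ix_i+\Box(C)\subseteq P$ and (II) $(c-1)v+\sum_i(a_i-b_i)x_i\in(c-1)P$. Granting it, put $x=(c-1)v+\sum_i(a_i-b_i)x_i$: then $x-(c-1)v\in\ZZ^d$, $x\in(c-1)P$ by (II), and $\Box_{\mathbf a}-x=v+\sum_i b_ix_i+\Box(C)\subseteq P$ by (I), so $\Box_{\mathbf a}\subseteq x+P\subseteq\Sigma$; this finishes $\CN(d,k)$, hence (c), after letting $l$ decrease to $\max\bigl(kd(d+1),\bcn(d,k)\bigr)$. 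To build $\mathbf b$ one observes that, in the $x_i$-coordinates at $v$, the hypothesis $\Box_{\mathbf a}\subseteq cP$ reads $\bigl[\tfrac{1}{c}\mathbf a,\tfrac{1}{c}(\mathbf a+\mathbf 1)\bigr]\subseteq P$, and one splits the budget $\mathbf a$ almost proportionally, $b_i\approx a_i/c$, so that (I) follows by comparing $[\mathbf b,\mathbf b+\mathbf 1]$ with the large simplex $\conv(v,v_1,\dots,v_d)\subseteq P$ and (II) by comparing $\mathbf a-\mathbf b$ with $\conv\bigl((c-1)v,(c-1)v_1,\dots,(c-1)v_d\bigr)\subseteq(c-1)P$, the bound $\E(P)\ge l$ providing the slack to absorb the rounding error (below $1$ in each coordinate). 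The main obstacle is exactly that for $\Box_{\mathbf a}$ sitting near $\partial(cP)$ the estimate $[\tfrac{1}{c}\mathbf a,\tfrac{1}{c}(\mathbf a+\mathbf 1)]\subseteq P$ is too weak to carry out this splitting; there one must organize the $\Box_{\mathbf a}$ into strongly correlated families and use that the region they fill genuinely overlaps the $\BCN$ layers — the delicacy anticipated in the outline, and the reason the edge bound carries the factor $k$ rather than merely $d(d+1)$.
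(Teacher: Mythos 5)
Parts (a) and (b) of your argument are correct and follow the paper's own route: the one‑dimensional case of Lemma \ref{cnormalvsnormal}(b) for (a), and Lemma \ref{homotheticlayers} together with the upward‑closedness of the defining sets for (b). The problem is part (c). Your scaffolding --- boundary layers handled by $\BCN(d,k)$, the deep region of $cP$ covered by lattice parallelepipeds via Lemma \ref{pcover} --- matches the paper, but the step that carries all the content, namely that each box $\Box_{\mathbf a}=cv+\sum_ia_ix_i+\Box(C)\subseteq cP$ (or at least each point of the deep region) lies in $x+P$ for some $x\in(c-1)P\cap\bigl((c-1)v+\ZZ^d\bigr)$, is not established. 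Your ``splitting claim'' $b_i\approx a_i/c$ is only sketched, and you yourself concede that it breaks down for boxes near $\partial(cP)$, deferring to unspecified ``strongly correlated families.'' That concession is precisely where the lemma lives; without it (c) is not proved. Even for interior boxes the sketch is incomplete: comparing $[\mathbf b,\mathbf b+\mathbf 1]$ with $\conv(v,v_1,\dots,v_d)$ controls nothing near the facets of $P$ opposite to $v$, and absorbing the rounding error $\sum_i\{a_i/c\}\,x_i$ requires a quantitative distance‑to‑boundary estimate that you never state.

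The paper closes this gap by never splitting boxes of $cP$ at all. After translating $v$ to $0$, it applies Lemma \ref{pcover} twice, to $P$ and to $(c-1)P$ separately (both have $\E>kd(d+1)$ since $c-1\ge1$), producing two families of \emph{congruent} translates of $\Box(C_j)$ with base points in $\ZZ^d$, one filling the deep region of $P$ and one that of $(c-1)P$. The homothety identity $P(0,\bar\varepsilon)\cap C_j+(c-1)P\bigl(0,(c-1)\bar\varepsilon\bigr)\cap C_j=cP(0,c\bar\varepsilon)\cap C_j$ writes every deep point of $cP$ as a sum of a deep point of $P$ and a deep point of $(c-1)P$, and the Minkowski sum of the two families is evaluated by the elementary identity $\Box_1+\Box_2=\bigcup_{x\in\vertex(\Box_1)}(x+\Box_2)$ for congruent parallelepipeds, which places every deep point of $cP$ inside $x+P(0,C_j)\subseteq x+P$ with $x$ a lattice point of $(c-1)P$. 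This congruent‑pair mechanism is exactly the ``strong correlation'' your proposal gestures at, and it is also the reason the cut‑off layers are taken $k$ times thinner than the $\BCN$ layers (so that $c\varepsilon_F\le\width_F(P)/(d+1)$ and the two covers overlap). You would need to supply this, or an equivalent device, to complete your version of (c).
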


\begin{proof}[Proof of Lemma \ref{inequalities}] (a) This follows from the first half of Lemma \ref{cnormalvsnormal}(a).

One can say more: $\cn(1,2)=0$ and, by the second half of Lemma \ref{cnormalvsnormal}(a), $\cn(1,k)=1$ for $k>2$.

\medskip(b) This follows from Lemma \ref{homotheticlayers}.

\medskip(c) We will use the following Minkowski sum formula for two homothetic parallelepipeds $\Box_1,\Box_2\subset\RR^d$, with $\Box_1$ at most as large as $\Box_2$:
\begin{equation}\label{paralminksum}
\Box_1+\Box_2=\bigcup_{v\in\vertex(\Box_1)}v+\Box_2
\end{equation}

Let $P\subset\RR^d$ be a rational $d$-polytope with
$\E(P)>\max\big(kd(d+1),\bcn(d,k)\big)$. We want to show that $P$
satisfies $\CN(d,k)$.

Pick $v\in\vertex(P)$. Applying the parallel translation by $-v$,
there is no loss of generality in assuming $v=0$.

Fix a cover of the form $\RR_+P=\bigcup_J C_j$, where the $C_j$,
$j\in J$, are simplicial $d$-cones, spanned by extremal rays of
$\RR_+P$; see the proof of Corollary \ref{pcoverweak}.

Assume $c\in[2,k]_\QQ$. Because $c-1\ge1$ we have
$\E\big((c-1)P\big)\ge\E(P)>kd(d+1)$ and by (twofold application
of) Lemma \ref{pcover}, for every $j\in J$ we have the inclusions:
\begin{equation}\label{why2}
\begin{aligned}
&\left(P-\bar\varepsilon\cdot\FF(P)^0\right)\cap C_j\ \subset\ P(\Box(C_j)),\\
&\left((c-1)P-\bar\varepsilon\cdot\FF\big((c-1)P\big)^0\right)\cap
C_j\ \subset\ \big((c-1)P\big)(\Box(C_j)),
\end{aligned}
\end{equation}
notation as in Lemma \ref{pcover} with $\bar\varepsilon=\left(\varepsilon_F\right)_{\FF(P)^0}$, $\varepsilon_F=\frac{\width_F(P)}{k(d+1)}$.

For $t\in\QQ_{>0}$ denote $t\bar\varepsilon=(t\varepsilon_F)_{F\in\FF(P)^0}$. Because $c-1\ge1$, we have
$$
(c-1)P-(c-1)\bar\varepsilon\cdot\FF\big((c-1)P\big)^0\ \subset\ (c-1)P-\bar\varepsilon\cdot\FF\big((c-1)P\big)^0,
$$
which, together with the second inclusion in (\ref{why2}), gives
\begin{equation}\label{c-1}
\big((c-1)P-(c-1)\bar\varepsilon\cdot\FF\big((c-1)P\big)^0\big)\cap
C_j\ \subset\ \big((c-1)P\big)(\Box(C_j)).
\end{equation}

Pick $j\in J$. Denote by $A$, resp. by $B$, the set of parallelepipeds of type
\begin{align*}
\sum_{i=1}^da_ix_{ji}+\Box(C_j),\qquad &a_1,\ldots,a_d\in\ZZ_+,\\
&x_{j1},\ldots,x_{jd}\quad \text{-- the extremal
generators of}\quad C_j,
\end{align*}
which fall inside $(c-1)P$, resp. inside $P$. Then we have

\begin{align*}
&\big((c-1)P\big)(\Box(C_j))+P(\Box(C_j))=\\
\\
&\qquad\bigcup_{\left(\Box_1,\Box_2\right)\in{A\times
B}}\Box_1+\Box_2=\bigcup_{\tiny{\begin{matrix} \Box_1\in A\\
x\in\vertex(\Box_1)
\end{matrix}}}\bigcup_{\Box_2\in B}x+\Box_2=\\
\\
&\quad\quad\quad\quad\quad\quad\bigcup_{x\in((c-1)P)(\Box(C_j))\cap\ZZ^d}\ \bigcup_{\Box\in B}x+\Box=\bigcup_{x\in((c-1)P)(\Box(C_j))\cap\ZZ^d}x+P(\Box(C_j))\subset\\
\\
&\quad\quad\qquad\qquad\qquad
\bigcup_{x\in((c-1)P)(\Box(C_j))\cap\ZZ^d}x+\big(P\cap C_j\big)\subset\\
&\quad\quad\qquad\qquad\qquad\qquad\qquad\qquad
\bigcup_{x\in((c-1)P)\cap\ZZ^d}x+\big(P\cap C_j\big),
\end{align*}
where the second and third equalities follow from (\ref{paralminksum}). We record:
\begin{equation}\label{crucialsequence}
\big((c-1)P\big)(\Box(C_j))+P(\Box(C_j))\subset\bigcup_{x\in((c-1)P)\cap\ZZ^d}x+\big(P\cap C_j\big).
\end{equation}

On the other hand, for every $j\in J$, the following equality holds true for reasons of homothety
(w.r.t. to the origin):
\begin{equation}\label{homothetically}
\begin{aligned}
\left((c-1)P-(c-1)\bar\varepsilon\cdot\FF\big((c-1)P\big)^0\right)\cap
C_j\ +\ &\left(P-\bar\varepsilon\cdot\FF(P)^0\right)\cap
C_j=\\&\left(cP-c\bar\varepsilon\cdot\FF(cP)^0\right)\cap C_j.
\end{aligned}
\end{equation}
Then, integrating over $j\in J$, the first
inclusion in (\ref{why2}), (\ref{c-1}), (\ref{crucialsequence}), and
(\ref{homothetically}) imply
\begin{equation}\label{deep}
cP-c\bar\varepsilon\cdot\FF(cP)^0\ \subset\ \bigcup_{x\in(c-1)P\cap\ZZ^d}x+P.
\end{equation}

For every $F\in\FF(P)^0$ we have
$c\varepsilon_F\le\frac{\width_F(P)}{d+1}$. Therefore,
\begin{equation}\label{14}
cP-\bar\sigma\cdot\FF(cP)^0\ \subset\ cP-c\bar\varepsilon\cdot\FF(cP)^0,
\end{equation}
where
$$
\bar\sigma=(\sigma_F)_{F\in\FF(P)^0},\qquad\sigma_F=\frac{\width_F(P)}{d+1}.
$$
Because $\E(P)>\bcn(d,k)$, (\ref{deep}) and (\ref{14}) together
imply $\CN(d,k)$ for $P$.
\end{proof}

\begin{corollary}\label{maininequality}
(a) For all  $d\in\NN_{\ge2}$ and $k\in\QQ_{\ge2}$ we have
$$
\cn(d,k)\le\max\left(d(d+1)k,\
\frac{d+1}d\cn\left(d-1,k+\frac{k-1}d\right)\right).
$$

(b) For all $d\in\NN$ and $k\in\QQ_{\ge2}$ we have
$\cn(d,k)<\infty$.
\end{corollary}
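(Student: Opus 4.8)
The plan is to derive everything directly from Lemma \ref{inequalities}; no new geometry is needed.

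For part (a), I would simply chain parts (b) and (c) of Lemma \ref{inequalities}. Part (c) gives $\cn(d,k)\le\max\bigl(kd(d+1),\bcn(d,k)\bigr)$, and part (b) gives $\bcn(d,k)\le\frac{d+1}{d}\cn\bigl(d-1,k+\frac{k-1}{d}\bigr)$. Substituting the second bound into the first, and using that $\max(a,\cdot)$ is nondecreasing in its second slot, produces exactly the asserted inequality.

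For part (b), I would induct on $d$, proving the claim for all $k\in\QQ_{\ge2}$ at once. The base case $d=1$ is immediate from Lemma \ref{inequalities}(a): $\cn(1,k)\le1<\infty$. For the step, assume $\cn(d-1,k')<\infty$ for every $k'\in\QQ_{\ge2}$. Fix $k\in\QQ_{\ge2}$ and put $k'=k+\frac{k-1}{d}$; since $k\ge2$ one has $k'\ge k\ge2$, so $k'$ is admissible and $\cn(d-1,k')<\infty$ by the inductive hypothesis. Part (a) then bounds $\cn(d,k)$ by the maximum of the two finite rational numbers $kd(d+1)$ and $\frac{d+1}{d}\cn(d-1,k')$, whence $\cn(d,k)<\infty$.

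There is no real obstacle here; the one thing to be careful about is that the recursion does not preserve the parameter $k$ --- it feeds the larger value $k+\frac{k-1}{d}$ into dimension $d-1$ --- so the induction on $d$ must be run uniformly over all admissible $k$ rather than for a single fixed $k$. It is also worth recording that ``$\cn(d,k)<\infty$'' is merely the statement that the set whose infimum defines $\cn(d,k)$ is nonempty, and this is precisely what the explicit bound in (a) supplies.
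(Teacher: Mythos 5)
Your proposal is correct and matches the paper's proof, which is exactly the one-line observation that (a) follows by chaining Lemma \ref{inequalities}(b) and (c), and (b) follows from (a) together with the base case Lemma \ref{inequalities}(a). Your explicit remark that the induction on $d$ must be carried out uniformly over all $k\in\QQ_{\ge2}$ (since the recursion replaces $k$ by the larger admissible value $k+\frac{k-1}{d}$) is the one detail the paper leaves implicit, and you handle it correctly.
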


The part (a) follows from Lemma \ref{inequalities}(b,c), and the part (b)
follows from the part (a) and Lemma \ref{inequalities}(a).

\begin{remark}\label{finalremark}
(a) In the proof above we used twice that $c-1\ge1$. This explains
why in Definition \ref{definition} we choose $k\ge2$ and
$c\in[2,k]_\QQ$, and not $k\ge1$ and $c\in[1,k]_\QQ$.

(b) We have \emph{not} shown that $\lim_{k\to\infty}\cn(d,k)<\infty$.
\end{remark}

\section{Proof of the main result}
\subsection{Proof of Theorem \ref{conjecture(a)}}
The limit case will be taken care of by

\begin{lemma}\label{contraction}
Let $d\in\NN$ and $k\in\QQ_{\ge2}$. If $P$ is a rational
$d$-polytope with $\E(P)=\cn(k,d)$ then $P$ satisfies
$\CN(d,k)$.
\end{lemma}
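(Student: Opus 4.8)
The content of the lemma is that the infimum defining $\cn(d,k)$ is actually attained; note first that $\cn(d,k)<\infty$ by Corollary \ref{maininequality}(b). The plan is a short limiting argument. If $\E(P)$ were \emph{strictly} greater than $\cn(d,k)$ the conclusion would be immediate from the definition of the infimum, so the only real issue is the boundary case $\E(P)=\cn(d,k)$. To handle it I would enlarge $P$ by an arbitrarily small homothety, apply $\CN(d,k)$ to the enlarged polytope, and pass to the limit as the enlargement factor tends to $1$.

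In detail: we may assume $d\ge1$, the case $d=0$ being trivial, so that $\E(P)>0$. Fix $v_0\in\vertex(P)$ and, for $t\in\QQ_{>0}$, let $P_t=v_0+(1+t)(P-v_0)$; this is a rational $d$-polytope with $\E(P_t)=(1+t)\E(P)>\cn(d,k)$. Since $\cn(d,k)$ is an infimum over $\QQ$, there is $l\in\QQ$ with $\cn(d,k)\le l<\E(P_t)$ such that every rational $d$-polytope $P'$ with $\E(P')\ge l$ satisfies $\CN(d,k)$; applying this with $P'=P_t$ shows that $P_t$ satisfies $\CN(d,k)$. Observe that the homothety $x\mapsto v_0+(1+t)(x-v_0)$ maps $\vertex(P)$ bijectively onto $\vertex(P_t)$, that $P_{t'}\subset P_t$ for $0<t'<t$, and that $\bigcap_{t>0}P_t=P$. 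Now fix $c\in[2,k]_\QQ$, so $c-1\ge1>0$, and a point $z\in cP$; by Remark \ref{cncomments}(a) it is enough to produce $w\in\vertex(P)$ and $x\in(c-1)P\cap\bigl((c-1)w+\ZZ^d\bigr)$ with $z\in x+P$. As $z\in cP\subset cP_t$, the property $\CN(d,k)$ for $P_t$ gives $w_t\in\vertex(P)$ and $x_t\in(c-1)P_t$ with $x_t-(c-1)\widetilde w_t\in\ZZ^d$ and $z-x_t\in P_t$, where $\widetilde w_t:=v_0+(1+t)(w_t-v_0)$ is the vertex of $P_t$ corresponding to $w_t$. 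For $t\in(0,1]$ the point $x_t$ lies in the bounded set $(c-1)P_1$ and $\widetilde w_t$ in $P_1$, so the integer vectors $x_t-(c-1)\widetilde w_t$ take only finitely many values while $w_t$ ranges over the finite set $\vertex(P)$. Hence there is a sequence $t_n\downarrow0$ along which $w_{t_n}\equiv w$ and $x_{t_n}-(c-1)\widetilde w_{t_n}\equiv m\in\ZZ^d$ are constant, and then $x_{t_n}=(c-1)\bigl(v_0+(1+t_n)(w-v_0)\bigr)+m\to(c-1)w+m=:x$, while $\widetilde w_{t_n}\to w$. Since the $P_t$ are closed and nested with intersection $P$, passing to the limit in the relations $x_{t_n}\in(c-1)P_{t_n}$, $x_{t_n}-(c-1)\widetilde w_{t_n}=m$, and $z-x_{t_n}\in P_{t_n}$ gives $x\in(c-1)P$, $x-(c-1)w=m\in\ZZ^d$, and $z-x\in P$; that is, $z\in x+P$ as required. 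As $z$ and $c$ were arbitrary, $P$ satisfies $\CN(d,k)$.

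I expect the only delicate point to be this passage to the limit: the combinatorial data attached to the cover in $\CN(d,k)$ — namely the vertex $w_t$ and the lattice translation $x_t-(c-1)\widetilde w_t$ — is discrete and has no reason to vary continuously with $t$. The remedy, as above, is that for $t\in(0,1]$ these data range over \emph{finite} sets, so one can select a subsequence along which they are literally constant; the nestedness $P_{t'}\subset P_t$ then carries the three membership relations over to the limit. Everything else is routine.
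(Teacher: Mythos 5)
Your proof is correct and shares the paper's overall strategy: since $\cn(d,k)<\infty$ (Corollary \ref{maininequality}(b)), dilate $P$ by a rational factor $1+t>1$ to land in the strict-inequality case, then let $t\to0$. The two arguments diverge only in how the limit is taken. The paper works globally: it notes that the union $Y$ on the right-hand side of $\CN(d,k)$ is a finite union of translates of $P$, hence closed, and that for small dilation the relevant lattice-point index sets are just parallel translates of a fixed finite set, so $cP\setminus Y$ has Lebesgue measure zero and therefore is empty. You work pointwise: for each $z\in cP$ you use the boundedness of $(c-1)P_1$ to see that the combinatorial witness (the vertex $w_t$ together with the integer vector $x_t-(c-1)\widetilde w_t$) ranges over a finite set, extract a subsequence on which it is constant, and pass to the limit in the three membership relations using that the $P_t$ are closed and nested down to $P$; centering the homothety at a vertex $v_0$ is exactly what makes the nesting $P\subset P_{t'}\subset P_t$ work. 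Both routes are sound; yours trades the paper's measure-theoretic step (and its mildly delicate claim that the lattice-point sets stabilize under small dilations) for a per-point pigeonhole extraction.
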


\begin{proof}
We can assume $P\subset\RR^d$. On the one
hand, for  all $c\in[2,k]_\QQ$ and all sufficiently small
$\varepsilon\in\QQ_{>0}$, depending on $k$ and $P$ (but not on $c$), the following holds true for any vertex $v\in P$: the set
$$
(c-1)(1+\varepsilon)P\ \cap\
\left((c-1)(1+\varepsilon)v+\ZZ^d\right)\subset\RR^d
$$
is the parallel translate by $\varepsilon(c-1)v$
of the set
$$
(c-1)P\ \cap\ \left((c-1)v+\ZZ^d\right)\subset\RR^d.
$$
On the other hand, the polytope $(1+\varepsilon)P$ is a homothetic image of $P$, approximating $P$ as $\varepsilon\to 0$.
Consequently, since the unions of only finitely many polytopes are involved, for every number $c\in[2,k]_\QQ$, the complement $cP\setminus\UU$ is a closed measurable set
in $\RR^d$ that can be approximated measure-wise with arbitrary precision by sets of the form
$c(1+\varepsilon)P\setminus\Bbb U_{\vertex}\big((1+\varepsilon)P,c\big)$, $\varepsilon\in\QQ_{>0}$. But the latter are all empty sets.
\end{proof}

Now we turn to Theorem \ref{conjecture(a)} proper. By Corollary
\ref{maininequality}(b), the function
$\cn(d,k):\NN\times\QQ_{\ge2}\to\RR_+$ is well defined. For any
fixed $d\in\NN$ the function $\cn(d,k):\QQ_{\ge2}\to\RR_+$ is
non-decreasing. So, by Corollary \ref{maininequality}(a), for all $d\in\NN_{\ge2}$ and $k\in\QQ_{\ge2}$ we have
the (simpler) inequalities:
$$
\cn(d,k)\le\max\left(d(d+1)k,\ \
\frac{d+1}d\cn\left(d-1,\frac{d+1}d\cdot k\right)\right).
$$
By induction on $i$, based on iterative use of this inequality, we derive
\begin{align*}
\cn(d&,k)\le\\
&\max_{i=1,
\ldots,d-1}\left(\left\{\frac{d+1-j}{d+2-j}\cdot(d+1)^2k\right\}_{j=1}^i,\
\ \frac{d+1}{d+1-i}\cn\left(d-i,\ \frac{d+1}{d+1-i}\cdot k\right)\right).
\end{align*}
Therefore,
\begin{align*}
&\cn(d,k)\le\\
&\qquad\max\left(\left\{\frac{d+1-j}{d+2-j}\cdot(d+1)^2k\right\}_{j=1}^{d-1},\
\ \frac{d+1}2\cn\left(1,\ \frac{d+1}2\cdot k\right)\right)\le\\
&\qquad\qquad\qquad\max\left(d(d+1)k,\
\frac{d+1}2\right)=d(d+1)k.
\end{align*}

This already proves the version of Theorem \ref{conjecture(a)}
with the strict inequality $\E(P)>d(d+1)k$, and the
non-strict inequality is covered by Lemma \ref{contraction}.\qed

\subsection{Proof of Theorem \ref{latticepolytopes}(a)}\label{subsection}
All we need is

\begin{lemma}
Every lattice $d$-polytope $P$ with $\E(P)\ge\cn(d,4)$ is
integrally closed.
\end{lemma}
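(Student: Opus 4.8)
The plan is to reduce the statement about integral closedness to the convex-normality machinery already developed, exploiting the fact that for lattice polytopes the lattice shifts at integral vertices are trivial. First I would invoke Theorem \ref{conjecture(a)}: since $\cn(d,4)$ is by Corollary \ref{maininequality}(b) a finite real number, and by the estimates in the proof of Theorem \ref{conjecture(a)} we have $\cn(d,4)\le 0.5d^2(d+1)\cdot 4 = 2d^2(d+1)$, any lattice $d$-polytope $P$ with $\E(P)\ge\cn(d,4)$ satisfies $\CN(d,4)$. The point of working with $k=4$ rather than $k=d+1$ is precisely the efficiency remark made just before the lemma: we do not need the full strength of $(d+1)$-convex-normality, only enough to bootstrap the factorization over all integer multiples.

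The second step is to upgrade $\CN(d,4)$ to the integrally-closed property by a descending-induction argument on the multiplier $c$, essentially the argument already carried out inside the proof of Lemma \ref{cnormalvsnormal}(a). The key observation is equation (\ref{integralvertex}): at a lattice vertex $v$ and for integral $c$ we have $(c-1)v+\ZZ^d=\ZZ^d$, so $\CN(d,4)$ says that for every $z\in cP\cap\ZZ^d$ with $c\in\{2,3,4\}$ there exist $v\in\vertex(P)$, $x\in(c-1)P\cap\ZZ^d$, and $x_c\in x+P$ with $z=x+x_c$; then automatically $x_c\in P\cap\ZZ^d$, and one peels off lattice points one at a time down to $c=1$. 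This handles $c\le 4$ directly.

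For $c\ge 5$ I would switch to Lemma \ref{cn-alternative}, which gives $cP=\bigcup_{v\in\vertex(P)} v+(c-1)P$ for all $c\in\QQ_{\ge d+1}$ — and in particular for all integers $c\ge d+1$ — together again with (\ref{integralvertex}): given $z\in cP\cap\ZZ^d$, write $z=v+x$ with $v\in\vertex(P)\subset\ZZ^d$ and $x\in(c-1)P$, whence $x\in(c-1)P\cap\ZZ^d$, and descend from $c$ to $d+1$. This reduces everything with $c\ge d+1$ to the range $c\le d$. The only remaining gap is the middle range $5\le c\le d$, but this is covered by iterating the $\CN(d,4)$ factorization in blocks: repeatedly split off one lattice point using the $c=2$ instance of $\CN(d,4)$ (i.e., $2P\cap\ZZ^d\subset\bigcup_{x\in P\cap\ZZ^d} x+P$ applied to $cP=(cP)$ viewed relative to a vertex), or more cleanly, observe that once $z\in cP\cap\ZZ^d$ we may peel off a single lattice generator at a time as long as the residual multiplier stays $\ge 2$, using the descending step from the $c=2,3,4$ analysis applied to the polytope $cP$ itself — the residuals always land in $c'P\cap\ZZ^d$ for smaller $c'$, and we bottom out at $c'\in\{1,2,3,4\}$, which we have already resolved.

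I expect the main obstacle to be packaging these three regimes ($c\le 4$; $5\le c\le d$; $c\ge d+1$) into one clean descending induction without circularity, and in particular making sure the peeling step in the middle range is legitimate: one must check that the single-point factorization $c'P\cap\ZZ^d\subset\bigcup_{x\in(c'-1)P\cap\ZZ^d}(x+P)$ holds for every integer $c'$ in that range, which is exactly the content of $\CN(d,4)$ only for $c'\le 4$. The resolution is to note that $\CN(d,4)$ applied to the dilate is not what is needed; instead, for $c'$ in the middle range we feed $c'P$ into Lemma \ref{cn-alternative} only when $c'\ge d+1$, and for $5\le c'\le d$ we instead use that $c'P\cap\ZZ^d\subset 2\cdot((c'/2)P)\cap\ZZ^d$ is not integral in general — so the honest fix is to simply chain the $c=2$ case: since $c'\ge 2$ we can always write, via the $\supset$-direction being automatic and the $\subset$-direction for $c=2$ needing $\E\ge\cn(d,4)$ on the polytope $\lceil c'/2\rceil P$-type argument... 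This is the delicate bookkeeping point, and the safe route — which I would adopt — is the one used for Theorem \ref{latticepolytopes}(a) in the paper: apply $\CN(d,4)$ to peel off lattice points while the multiplier exceeds $1$, always keeping the residual multiplier in $\QQ$ but landing it on integers, and close the induction at the base cases $c\in\{1,2,3,4\}$ together with the Lemma \ref{cn-alternative} reduction for $c\ge d+1$, leaving no middle range at all once one realizes that the $c\ge d+1$ reduction already drops $c$ below $d+1$ and the $\CN(d,4)$ factorization then drops it, four units at a time is not needed — one unit at a time via the $c=2$ instance suffices, and the $c=2$ instance is guaranteed because $\E(P)\ge\cn(d,4)\ge\cn(d,2)$.
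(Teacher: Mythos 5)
Your argument covers $c\in\{2,3,4\}$ (via $\CN(d,4)$ and equation (\ref{integralvertex})) and $c\ge d+1$ (via Lemma \ref{cn-alternative}), but the middle range $5\le c\le d$ is left genuinely open, and your closing claim that ``one unit at a time via the $c=2$ instance suffices'' does not hold up. The $c=2$ instance of $\CN(d,4)$ for $P$ only describes $2P$; to peel a single lattice point off $z\in cP\cap\ZZ^d$ you would need the single-step factorization $cP\cap\ZZ^d\subset(c-1)P\cap\ZZ^d+P\cap\ZZ^d$, which is the $c$ instance of convex normality and is only guaranteed when $\E(P)\ge\cn(d,c)$ -- exactly the dependence on $c$ that the lemma is designed to avoid. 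Your attempted workaround via $\tfrac{c}{2}P$ fails for odd $c$ for the reason you yourself half-identify: $\tfrac{c}{2}P$ is not a lattice polytope, its vertices $\tfrac{c}{2}v_0$ are not in $\ZZ^d$, so the translation classes $\big(\tfrac{c}{2}-1\big)v_0+\ZZ^d$ appearing in $\CN$ do not consist of integer points and the two summands of the resulting split need not be integral.

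The missing idea is a dyadic rescaling of both the polytope and the lattice. For $c\in[2^n,2^{n+1}]_\NN$ set $P_n=2^{n-1}P$ and $L_n=2^{n-1}\ZZ^d$; then $P_n$ is a lattice polytope for $L_n$ with $\E(P_n)\ge\cn(d,4)$ in the $L_n$-normalization, and $cP=c'P_n$ with $c'=c2^{-n+1}\in[2,4]_\QQ$. Applying $\CN(d,4)$ to $P_n$ w.r.t.\ $L_n$ (Lemma \ref{contraction}) writes $z=x+y$ with $x\in(c'-1)P_n\cap\big((c'-1)v+L_n\big)$; the key computation is that for $v=2^{n-1}v_0$, $v_0\in\vertex(P)\subset\ZZ^d$, one has $(c'-1)v=(c-2^{n-1})v_0\in\ZZ^d$, so $x$ and hence $y=z-x$ are integral. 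This yields $cP\cap\ZZ^d=(c-2^{n-1})P\cap\ZZ^d+2^{n-1}P\cap\ZZ^d$ with both multipliers $c-2^{n-1}\ge 2^{n-1}\ge1$ strictly smaller than $c$, and ordinary induction on $c$ closes the argument. Note that this handles \emph{all} $c\ge2$ uniformly, so neither Lemma \ref{cn-alternative} nor a separate treatment of $c\ge d+1$ is needed; your three-regime decomposition is not wrong where it works, but it cannot be completed without some version of this rescaling.
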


\begin{proof}
Let $P\subset\RR^d$ be as in the lemma. We show the equality in
Definition \ref{definition}(a) by induction on the factors $c\in\NN$. Assume
it has been shown for all factors $<c$.

For every $n\in\NN$ denote
\begin{align*}
I_n=\left[2^n,2^{n+1}\right]_\NN,\qquad P_n=2^{n-1}P,\qquad
L_n=2^{n-1}\ZZ^d\subset\ZZ^d.
\end{align*}
Then $P_n$ is a rational polytope with $\E_n(P)\ge\cn(d,4)$, where the subindex in $\E_n$ indicates that the lattice lengths are measured w.r.t. $L_n$.

Let $c\in I_n$ for some $n\in\NN$, and pick $z\in cP\cap\ZZ^d$. We
have
$$
cP=
\begin{cases}
c'P_n\ \text{with}\ c'=c2^{-n+1}\in[2,4]_\QQ\ \text{if}\ n>1,\\
c\in[2,4]_\QQ\ \text{if}\ n=1.
\end{cases}
$$

If $n>1$ then $P_n$ satisfies $\CN(d,4)$ w.r.t. the lattice $L_n$; one invokes Lemma
\ref{contraction} in the limit case $\E(P)=\cn(d,4)$. So $z=x+y$ for some
$x\in(c'-1)P_n\cap\left((c'-1)v+L_n\right)$, $v\in\vertex P_n$,
and $y\in P_n$. Then, necessarily, $y\in P_n\cap\ZZ^d$. In
particular,
$\left((c-2^{n-1})P\right)\cap\ZZ^d\ +\ \left(2^{n-1}P\right)\cap\ZZ^d=(cP)\cap\ZZ^d$.

If $n=1$ then we have $z\in\big((c-1)P\big)\cap\ZZ^d\ +\ P$; again, Lemma
\ref{contraction} is invoked in the limit case $\E(P)=\cn(d,4)$. Therefore,
$((c-1)P)\cap\ZZ^d\ +\ P\cap\ZZ^d=(cP)\cap\ZZ^d$.

In both cases the induction assumption applies.
\end{proof}

\subsection{Proof of Theorem \ref{latticepolytopes}(b)} Lattice parallelepipeds are integrally closed -- a consequence of Lemma
\ref{cnormalvsnormal}(a). Therefore, we only need to show that a rational simplex $P$ with $\E(P)\ge d(d+1)$
is covered by lattice parallelepipeds. In view of Corollary
\ref{pcoverweak}, it is enough to show that we have the cover
$$
\bigcup_{v\in\vertex(P)}P-\bar\varepsilon(v)\cdot\FF(P)^v=P,
$$
where $\bar\varepsilon(v)=\left(\varepsilon_F\right)_{F\in\FF(P)^v}$
for every $v\in\vertex(P)$ and
$\bar\varepsilon_F=\frac{\width_F(P)}{d+1}$
for every $F\in\FF(P)$. (Notation as in that corollary.)

Since $P$ is a simplex, for every
vertex $v\in\vertex(P)$ the polytope $P-\bar\varepsilon\cdot\FF(P)^v$ is the
homothetic image of $P$ with factor $\frac d{d+1}$ and centered at
$v$. Therefore, the desired covering follows from the fact that at least
one of the barycentric coordinates of each point $x\in P$ w.r.t.
the vertices of $P$ is $\ge\frac 1{d+1}$.\qed

\begin{remark}\label{whysimplices}
(a) The equality $\bigcup_{v\in\vertex(P)}P-\bar\varepsilon(v)\cdot\FF(P)^v=P$
does not hold true for general polytopes, not even in dimension 2. This explains the need of $\BCN(d,k)$ in the proof of
Theorem \ref{conjecture(a)}.

(b) We have the following minor improvement of Theorem \ref{conjecture(a)} in dimensions $d=3,4$: every lattice $d$-polytope $P$ with $\E(P)\ge d(d^2-1)$ is integrally closed. In fact,Theorem \ref{conjecture(a)} and Lemma \ref{cnormalvsnormal}(b) imply the version of
Theorem \ref{latticepolytopes}(a) with the inequality
$\E(P)\ge d(d^2-1)$, which is a better estimate than $\E(P)\ge 4d(d+1)$ for $d=3,4$.
\end{remark}

\noindent\emph{Notice.} The results in this paper extend to all
polytopes whose edges are parallel to rational directions and all
real factors $\ge2$. For the approach developed above, the most general setting possible is when one fixes an arbitrary
finitely generated additive subgroup $\Lambda\subset\RR^d$ (no longer a discrete subset of $\RR^d$ if $\rank\Lambda>d$) and
studies polytopes $P\subset\RR^d$ whose edge directions
are parallel to elements of $\Lambda$.

A notable exception from the arguments above that go through when $\rank\Lambda>d$ is the proof of Lemma \ref{contraction}.

\bibliography{bibliography}
\bibliographystyle{plain}

\end{document}